\newtheorem{thm}{Theorem}[section]
\newtheorem{cor}[thm]{Corollary}
\newtheorem{lem}[thm]{Lemma}
\newtheorem{prop}[thm]{Proposition}
\newtheorem{defn}[thm]{Definition}
\newtheorem{rmk}[thm]{Remark}
\newcommand{\RR}{\mathbb{R}}
\newcommand{\CC}{\mathbb{C}}
\newcommand{\ZZ}{\mathbb{Z}}
\newcommand{\QQ}{\mathbb{Q}}
\newcommand{\NN}{\mathbb{N}}
\newcommand{\PP}{\mathbb{P}}
\newcommand{\FF}{\mathbb{F}}
\newcommand{\HH}{\mathbb{H}}
\newcommand{\dc}{\mathcal{D}}
\newcommand{\lc}{\mathcal{L}}
\newcommand{\mc}{\mathcal{M}}
\newcommand{\qc}{\mathcal{Q}}
\newcommand{\la}{\langle}
\newcommand{\ra}{\rangle}
\newcommand{\bsm}{\left ( \begin{smallmatrix} }
\newcommand{\esm}{\end{smallmatrix} \right )}
\newcommand{\bma}{\begin{pmatrix}}
\newcommand{\ema}{\end{pmatrix}}
\newcommand{\opn}{\operatorname}
\newcommand{\op}{\oplus}
\newcommand{\fc}{\mathcal{F}}
\title{Boundary combinatorics of orthogonal modular 4-folds}
\author{Matthew Dawes}
\date{}
\begin{document}
\maketitle 
\def\thefootnote{}
\footnote{\textit{2010 Mathematics Subject Classification:} 
Primary 14G35; Secondary 14M27. \\
\textit{Key words and phrases:}
orthogonal modular variety; generalised Kummer variety; toroidal compactification.}
\begin{abstract}
We study combinatorial problems related to the singularities and boundary components of toroidal compactifications of orthogonal modular varieties.
In particular, those associated with the moduli of algebraic deformation generalised Kummer 4-folds.
\end{abstract}
\section{Introduction}
Toroidal compactifications of modular varieties provide a rich source of combinatorics.
Two natural problems are to count boundary components, and to describe the singular locus.
Both of these problems have been studied in detail for Siegel modular 3-folds \cite{zbMATH00011135, zbMATH00520405}, but there are fewer results for orthogonal modular varieties.
In particular, while there are some results for orthogonal modular varieties of large dimension \cite{scattone}, less is known about those of small dimension.
The purpose of this article is to study these problems for orthogonal modular 4-folds $\fc_{2d}$ associated with the moduli of deformation generalised Kummer varieties of dimension 4, with polarisation of split type, and degree $2d$, where $d=p^2$ for an odd prime $p$.

In Theorem \ref{boundaryboundsthm} we produce bounds for the number of boundary curves of $\fc_{2p^2}$.
In Theorem \ref{fcFsingsthm} we bound the number of components of the singular locus of a toroidal compactification $\fc^{tor}_{2p^2}$ in the neighbourhood of a boundary curve.
To the best of the author's knowledge, these are the first such results for $\fc_{2d}$, and the first such results for orthogonal modular 4-folds.
\section{Notation}
\begin{tabular}{l l}
$( \bigoplus_i (\frac{*}{*}), \bigoplus_i C_{a_i} )$ & The finite quadratic form $\bigoplus_i (\frac{*}{*})$ (with values in $\QQ/2\ZZ$) on the \\
& abelian group $\op_i C_{a_i}$. \\ 
$\chi_X$ & The characteristic polynomial of the matrix $X$.\\
$C_r$ & The cyclic group of order $r$.\\
$\opn{div}(x)$ & The \emph{divisor} of $x \in M$ for a lattice $M$. \\
& (i.e. the positive generator of the ideal $(x, M)$.)\\  
$D(M)$ & The discriminant group of the (even) lattice $M$ \cite{Nikulin}. \\
$\dc_N$ & A connected spinor component of the domain  \\
& $\Omega_N = \{ [x] \in \PP(N \otimes \CC) \mid (x, x)= 0, (x, \overline{x})>0 \}$ \\
& for a lattice $N$ of signature $(2,n)$. \\
$\fc_{2d}$ & The orthogonal modular variety $\dc_{h^{\perp}} / \opn{O}^+(L, h)$ where $h \in L$ is primitive,\\
 & of split type, and degree $2d$.\\
$\widetilde{\Gamma}$ & The intersection $\Gamma \cap \widetilde{\opn{O}}(M)$ for $\Gamma \subset \opn{O}(M)$. \\
$\Gamma^+$ & The intersection $\Gamma \cap \opn{O}^+(M)$ for $\Gamma \subset \opn{O}(M)$. \\
$h^{\perp}$ & The orthogonal complement $h^{\perp} \subset L$ for $h \in L$. \\ 
$L$    & The lattice $U^{\op 3} \op \la -2(n+1) \ra$. \\
$L_{2x,2y}$ & The lattice $U^{\op 2} \op \la -2x \ra \op \la -2y \ra$. \\
$M^{\vee}$ & The dual of the lattice $M$.\\
$\opn{O}(L, h)$ & The group $\{g \in \opn{O}(L) \mid gh = h \}$ for $h \in L$. \\ 
$\opn{O}(M)$ & The orthogonal group of a lattice $M$. \\
$\opn{O}^+(M)$ & The kernel of the real spinor norm on $\opn{O}(M \otimes \RR)$. \\
$\widetilde{\opn{O}}(M)$ & The \emph{stable orthogonal group} $\widetilde{\opn{O}}(M) = \{ g \in \opn{O}(M) \mid g_{\vert D(M)}=id \}$. \\
$\phi$ & The Euler $\phi$-function.\\
$\Phi_r$ & The $r$-th cyclotomic polynomial.\\
\end{tabular}
\section{Moduli of Deformation generalised Kummer varieties}
Let $A$ be an abelian surface, and let $A^{[n+1]}$ be the Hilbert scheme parametrising $(n+1)$-points on $A$.
The Hilbert scheme $A^{[n+1]}$ inherits an addition from $A$, and so there is a projection 
\begin{equation*}
p:A^{[n+1]} \rightarrow A.
\end{equation*}
The fibre $p^{-1}(0)$ is an irreducible symplectic (compact hyperk\"ahler) manifold known as a \emph{generalised Kummer variety} \cite{beauvillesexamples}.
A deformation $X$ of $p^{-1}(0)$ is an irreducible symplectic manifold known as a \emph{deformation generalised Kummer variety}.

There is a lattice structure $L$ on $H^2(X, \ZZ)$ defined by the Beauville-Bogomolov-Fujiki \cite{beauvillesexamples} form.
By the results of Rapagnetta \cite{rapagnetta2}, $L$ is equal to 
\begin{equation*}
L=U^{\op 3} \op \la -2(n+1) \ra,
\end{equation*}
where $\la -2(n+1) \ra$ is the rank 1 lattice generated by a vector of square length $-2(n+1)$ and  $U$ is the hyperbolic plane.
A basis $\{e, f\}$ for $U$ is said to be \emph{standard} or a \emph{standard basis for $U$} if its  Gram matrix is given by
\begin{equation*}
\begin{pmatrix}
0 & 1 \\
1 & 0 \\
\end{pmatrix}.
\end{equation*}

A choice of ample line bundle $\lc \in \opn{Pic}(X)$ defines a \emph{polarisation} for $X$.
The first Chern class of $\lc$ defines a vector $h:=c_1(\lc) \in L$.
The \emph{degree} $2d$ of $\lc$ is defined as the square length $h^2$, and the \emph{polarisation type} of $\lc$ is defined as the $\opn{O}(L)$-orbit of $h$.
We assume all polarisations are \emph{primitive}; that is, the vector $h$ is primitive in the lattice $L$.

Let $\opn{O}^+(L, h)$ be the subgroup of $\opn{O}(L, h)$ consisting of all elements of spinor norm 1, and let $\dc_{h^{\perp}}$ be a connected component of the quadric
\begin{equation*} 
\Omega_{h^{\perp}}=\{[x] \in \PP(h^{\perp} \otimes \CC) \mid \text{$(x,x)=0$, $(x, \overline{x})>0$} \}
\end{equation*}
preserved by the kernel of the real spinor norm on $\opn{O}(h^{\perp} \otimes \RR)$.

There is a GIT quotient $\mc$ parametrising deformation generalised Kummer varieties of fixed dimension and polarisation type.
By Theorem 3.8 of \cite{Handbook}, for every connected component $\mc'$ of $\mc$ there is a finite-to-one dominant morphism $\psi: \mc' \rightarrow \fc$ to an orthogonal modular variety $\fc$ (i.e. a quotient of a Hermitian symmetric domain of type IV by an arithmetic subgroup of $\opn{O}(2,m)$).
Here, the orthogonal modular variety $\fc$ is given by 
\begin{equation*}
\fc= \opn{O}^+(L, h) \backslash \dc_{h^{\perp}}.
\end{equation*}
\begin{prop}\label{orbitclass}
Suppose $h \in L$ is primitive of length $2d >0$ with $\opn{div}(h) = f$.
Let $g = \left (2(n+1)f^{-1}, 2df^{-1} \right )$, 
$w=(g,f)$, 
$g = w g_1$, 
and $f = wf_1$. 
Then 
$2(n+1) = fgn_1 = w^2 f_1 g_1 n_1$ 
and 
$2d=f g d_1 = w^2 f_1 g_1 d_1$, 
where 
$(n_1, d_1) = (f_1, g_1) = 1$.
\begin{enumerate}
\item If $g_1$ is even, then $h$ exists if and only if $(d_1, f_1) = (f_1, n_1) = 1$ and $d_1/n_1$ is a quadratic residue modulo $f_1$. 
Moreover, the number of $\widetilde{\opn{O}}(L)$-orbits of $h$ with fixed $f$ is equal to $w_+(f_1) \phi(w_-(f_1)).2^{\rho(f_1)}$, where $w=w_+(f_1) w_-(f_1)$, $w_+(f_1)$ is the product of all powers of primes dividing $(w, f_1)$, $\rho(n+1)$ is the number of prime factors of $n+1$, and $\phi$ is the Euler function.
\item If $g_1$ is odd and $f_1$ is even, or $f_1$ and $d_1$ are both odd, then such an $h$ exists if and only if $(d_1, f_1) = (t_1, 2f_1)=1$ and $-d_1 / n_1$ is a quadratic residue modulo $2f_1$. 
The number of $\widetilde{\opn{O}}(L)$-orbits is equal to $w_+(f_1) \phi(w_-(f_1)).2^{\rho(f_1/2)}$ if $f_1$ is even, and to $w_+(f_1) \phi(w_-(f_1)).2^{\rho(f_1)}$ if $f_1$ and $d_1$ are both odd.
\item If $g_1$ and $f_1$ are both odd and $d_1$ is even, then such an $h$ exists if and only if $(d_1, f_1) = (n_1, 2f_1)=1$, $-d_1/(4t_1)$ is a quadratic residue modulo $f_1$, and $w$ is odd.
In such a case, the number of $\widetilde{\opn{O}}(L)$-orbits of $h$ is equal to $w_+(f_1) \phi(w_-(f_1)).2^{\rho(f_1)}$.
\item If $c \in \ZZ$ (determined modulo $f$) satisfies $(c, f)=1$ and $b = (d+c^2(n+1))/f^2$, then
\begin{equation*}
h^{\perp} \cong 2U \oplus B,
\end{equation*}
where 
\begin{equation*}
B = \begin{pmatrix} -2b & c \frac{2(n+1)}{f} \\ c \frac{2(n+1)}{f} & -2t \end{pmatrix}.
\end{equation*} 
\end{enumerate}
\end{prop}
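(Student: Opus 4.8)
The plan is to combine the theory of discriminant forms with Eichler's criterion. Since $L = U^{\op 3} \op \la -2(n+1)\ra$ contains two orthogonal copies of $U$, Eichler's criterion applies: the $\widetilde{\opn{O}}(L)$-orbit of a primitive $h$ is determined by the pair $(h^2, h_*)$, where $h_* \in D(L)$ is the image of $h/\opn{div}(h)$. First I would record that $D(L) \cong \ZZ/2(n+1)\ZZ$, generated by $\ell^* = \frac{1}{2(n+1)}\ell$ (with $\ell$ a generator of $\la -2(n+1)\ra$), carrying the discriminant form $q(\ell^*) \equiv -\frac{1}{2(n+1)} \pmod{2\ZZ}$. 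Writing $h = u + c\ell$ with $u \in U^{\op 3}$, a direct computation gives $h_* = \frac{2(n+1)c}{f}\ell^*$ with $\gcd(c,f)=1$, shows that $h_*$ has order exactly $f$ (so in particular $f \mid 2(n+1)$), and recovers the standard identity $q(h_*) \equiv 2d/f^2 \pmod{2\ZZ}$. Classifying orbits thus becomes the problem of counting elements of $D(L)$ of order $f$ whose discriminant value is $2d/f^2$.

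The second step is to turn this into elementary number theory. The constraint $q(h_*) \equiv 2d/f^2$ is equivalent to the congruence $f^2 \mid d + c^2(n+1)$, and I would note that this depends only on $c \bmod f$: under $c \mapsto c+f$ the quantity $d + c^2(n+1)$ changes by a multiple of $f^2$ precisely because $f \mid 2(n+1)c$. Since the map $c \mapsto \frac{2(n+1)}{f}c$ is injective modulo $f$, admissible residues $c \in (\ZZ/f)^{\times}$ correspond bijectively to admissible $h_*$, hence to orbits. Substituting $2(n+1) = fgn_1$ and $2d = fgd_1$ reduces the divisibility to a quadratic congruence of the form $n_1 c^2 + d_1 \equiv 0$ modulo $f_1$ or $2f_1$, whose solvability by a $c$ coprime to $f$ is exactly the quadratic-residue condition recorded in (1)--(3); the three cases reflect the $2$-adic analysis of this congruence, governed by the parities of $f_1$, $g_1$ and $d_1$. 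The orbit count then follows from the Chinese Remainder Theorem: each odd prime dividing $f_1$ contributes two square roots (yielding the factor $2^{\rho(f_1)}$, or $2^{\rho(f_1/2)}$ when $f_1$ is even), while the remaining freedom in $c$ along the $w$-direction produces the factor $w_+(f_1)\phi(w_-(f_1))$.

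For part (4) I would exhibit an explicit representative and compute its complement directly; this also settles existence in (1)--(3). Using a standard basis $\{u_1, u_2\}$ of a single hyperbolic plane $U$ inside $L$, set
\begin{equation*}
h = f u_1 + f b\, u_2 + c\,\ell, \qquad b = \frac{d + c^2(n+1)}{f^2}.
\end{equation*}
Then $h^2 = 2\bigl(f^2 b - (n+1)c^2\bigr) = 2d$, the vector $h$ is primitive because $\gcd(c,f)=1$, and $\opn{div}(h) = \gcd(fb, f, 2(n+1)c) = f$ since $f \mid 2(n+1)$; by Eichler this $h$ represents the required orbit. The two hyperbolic planes disjoint from $h$ split off as $2U$, and inside $U \op \la -2(n+1)\ra$ the orthogonality equation for $v = x u_1 + y u_2 + z\ell$ reads $f(xb + y) = 2(n+1)cz$, i.e. $y = \frac{2(n+1)c}{f}z - bx$. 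Hence
\begin{equation*}
v_1 = u_1 - b\, u_2, \qquad v_2 = \frac{2(n+1)c}{f}\, u_2 + \ell
\end{equation*}
form a $\ZZ$-basis of the rank-$2$ complement, and a one-line Gram computation gives $v_1^2 = -2b$, $v_2^2 = -2(n+1)$ and $(v_1, v_2) = c\,\frac{2(n+1)}{f}$, which is exactly the matrix $B$ (so $t = n+1$). Therefore $h^{\perp} \cong 2U \op B$.

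I expect the main obstacle to be the $2$-adic bookkeeping behind (1)--(3): determining the precise solvability criterion and the exact number of square roots at the prime $2$ --- the split between $f_1$ even and $f_1, d_1$ both odd, the passage from modulus $f_1$ to $2f_1$, and the sign in the residue condition --- and tracking how the factor $w$ distributes as $w_+(f_1)\,w_-(f_1)$ against the $2$-adic structure of $D(L)$. By comparison, the lattice computation in (4) is routine once the representative $h$ is chosen to lie in a single $U \op \la -2(n+1)\ra$ summand.
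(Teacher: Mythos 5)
Your proposal is correct and takes essentially the same route as the paper, whose proof is simply a deferral to Proposition 3.6 of \cite{ModuliSpacesOfIrreducibleSymplecticManifolds}: that argument is precisely your combination of Eichler's criterion on the pair $(h^2, h_*)$, the identification $D(L) \cong C_{2(n+1)}$ reducing orbit counting to the congruence $f^2 \mid d + c^2(n+1)$ with $(c,f)=1$, and the explicit representative $h = fu_1 + fb\,u_2 + c\ell$ whose orthogonal complement gives $2U \oplus B$. Your supporting computations (the injectivity of $c \mapsto \frac{2(n+1)}{f}c$ modulo $f$, the lift count producing $w_+(f_1)\phi(w_-(f_1))$, and the reading $t = n+1$, which correctly resolves the statement's typos inherited from the $K3^{[n]}$-type case) all check out, with only the acknowledged case-by-case $2$-adic verification in (1)--(3) left as routine, which is no less detail than the paper itself supplies.
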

\begin{proof}
Identical to Proposition 3.6 of \cite{ModuliSpacesOfIrreducibleSymplecticManifolds}.
(Noting that the Beauville lattice of a deformation generalised Kummer variety and an irreducible symplectic manifold of $K3^{[2(n+1)]}$-type differ by a factor of $2E_8(-1)$.) 
\end{proof}
\begin{defn}
A polarisation determined by a primitive vector $h \in L$ is said to be \emph{split}, or $h$ is said to be \emph{split}, if $\opn{div}(h)=1$.
\end{defn}
\begin{cor}
If $h \in L$ is split then,
\begin{enumerate}
\item the polarisation type of $h$ is uniquely determined by the length $h^2$;
\item the lattice $h^{\perp}$ is isomorphic to $L_{2(n+1), 2d}:=2U \op \la -2(n+1) \ra \op \la -2d \ra$.
\end{enumerate}
\end{cor}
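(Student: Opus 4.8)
The plan is to read off both assertions from Proposition \ref{orbitclass} by specialising to the split case $\opn{div}(h) = f = 1$. With $f = 1$ the auxiliary data of the proposition collapse: one computes $g = (2(n+1), 2d) = 2(n+1, d)$, which is even, together with $w = (g, 1) = 1$, $f_1 = 1$ and $g_1 = g$. In particular $g_1$ is even, so the applicable branch is case (1), and confirming that this is the governing case (rather than (2) or (3)) is the first thing I would check.

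For (1), since $f_1 = 1$ the existence hypotheses $(d_1, f_1) = (f_1, n_1) = 1$ and the quadratic-residue condition modulo $f_1$ all hold trivially, so a split $h$ of length $2d$ exists, and the number of $\widetilde{\opn{O}}(L)$-orbits is
\begin{equation*}
w_+(1)\,\phi(w_-(1))\cdot 2^{\rho(1)} = 1\cdot 1\cdot 1 = 1,
\end{equation*}
using $w = 1$ and $\rho(1) = 0$. Thus the split vectors of length $2d$ form a single $\widetilde{\opn{O}}(L)$-orbit. To upgrade this to a statement about the polarisation type I would argue that, because $\widetilde{\opn{O}}(L) \subseteq \opn{O}(L)$, this single stable orbit is contained in a single $\opn{O}(L)$-orbit; and since both $h^2$ and $\opn{div}(h)$ are $\opn{O}(L)$-invariant, the $\opn{O}(L)$-orbit of a split vector of length $2d$ is exactly the set of all such vectors. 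Hence the $\opn{O}(L)$-orbit -- the polarisation type -- depends only on $2d$.

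For (2), the quickest route is to invoke part (4) of Proposition \ref{orbitclass} with the choice $c = 0$, which is permitted since $(0, 1) = 1$; then $b = d$, the off-diagonal entry vanishes, and $B$ reduces to the diagonal matrix with entries $-2d$ and $-2(n+1)$, so that $h^{\perp} \cong 2U \op B = L_{2(n+1), 2d}$. As an independent check (which also sidesteps tracking the parameter in the $(2,2)$-slot of $B$) I would exhibit an explicit representative of the orbit found in (1): taking a standard basis $\{e, f\}$ of one hyperbolic summand and setting $h = e + df$, one has $h^2 = 2d$ and $\opn{div}(h) = 1$, while the complement of $h$ inside that $U$ is spanned by $e - df$ with $(e - df)^2 = -2d$; hence
\begin{equation*}
h^{\perp} \cong \la -2d \ra \op U^{\op 2} \op \la -2(n+1) \ra = L_{2(n+1), 2d}.
\end{equation*}

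I expect the only point needing real care to be the bookkeeping in (1): Proposition \ref{orbitclass} counts stable orthogonal orbits, whereas the polarisation type is defined through the full group $\opn{O}(L)$, so one must pass correctly from ``one $\widetilde{\opn{O}}(L)$-orbit'' to ``one $\opn{O}(L)$-orbit'' without accidentally merging distinct length classes. The remaining steps are direct substitutions, the principal verification being that $g$ is automatically even in the split case so that case (1) is the one that applies.
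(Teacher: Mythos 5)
Your proof is correct and takes the route the paper intends: the corollary is stated without proof as an immediate specialisation of Proposition \ref{orbitclass}, and your reduction to $f=1$ (forcing $w=f_1=1$ and $g_1$ even, hence case (1) with orbit count $w_+(1)\phi(w_-(1))2^{\rho(1)}=1$, together with part (4) at $c=0$ or your explicit representative $h=e+df$ for part (2)) is exactly that derivation. Your passage from a single $\widetilde{\opn{O}}(L)$-orbit to a single $\opn{O}(L)$-orbit, via the $\opn{O}(L)$-invariance of $h^2$ and $\opn{div}(h)$, is the one genuinely non-automatic step and you handle it correctly.
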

\begin{defn}
Let $\fc_{2d}$ denote the modular variety $\fc_{2d} = \opn{O}^+(L, h) \backslash \dc_{h^{\perp}}$ where $h \in L$ is split and $h^2 = -2d$.
\end{defn}
\section{The group $\opn{O}(L, h)$}
From now on, we assume all polarisations are split and fix $2(n+1)=6$.
Throughout, we shall use $h_x$ to denote split $h \in L$ of degree $x$.
\begin{prop}\label{modgroup}
If $h=h_{2d}$ and $d > 2$, then 
\begin{equation*}
\opn{O}(L, h) \cong \{ g \in \opn{O}(L_{6, 2d}) \mid g v^* = v^* + L_{6,2d} \},
\end{equation*}
where $v$ generates the $\la -2d \ra$ factor of $L_{6,2d}$ and $v^* = (2d)^{-1} v \in L^{\vee}$.
Moreover, if $d=p^2$ for an odd prime $p$, then $\opn{O}(L, h) \leq \opn{O}(L_{6, 2})$.
\end{prop}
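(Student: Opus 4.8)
The plan is to realise $\opn{O}(L,h)$ inside $\opn{O}(h^\perp)$ by restriction and to identify the image by Nikulin's gluing theory \cite{Nikulin}. Since $h$ is split of degree $2d$, the preceding Corollary gives $h^\perp \cong L_{6,2d} = 2U \op \langle -6\rangle \op \langle -2d\rangle$; let $v$ generate the $\langle -2d\rangle$ summand and set $v^* = (2d)^{-1}v$. Every $g \in \opn{O}(L,h)$ fixes $h$ and hence preserves $h^\perp$, so restriction defines a homomorphism $r\colon \opn{O}(L,h)\to \opn{O}(h^\perp)=\opn{O}(L_{6,2d})$. As $\langle h\rangle\otimes\QQ$ and $h^\perp\otimes\QQ$ span $L\otimes\QQ$ and $g$ acts as the identity on $\langle h\rangle$, the map $r$ is injective; the whole problem is to compute its image.

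Next I would make the gluing explicit. Regard $L$ as an even overlattice of $\langle h\rangle\op h^\perp$, corresponding to an isotropic subgroup $H\subseteq D(\langle h\rangle)\op D(h^\perp)$. Choosing a standard basis so that $h = e + d f$ in a hyperbolic plane $\langle e,f\rangle$ and $v = e - d f$, the relations $e=\tfrac12(h+v)$ and $f=(2d)^{-1}(h-v)$ let me read off $H$ directly. Using $\opn{div}(h)=1$ I expect to find that $H\cong\ZZ/2d$, that its projection to $D(\langle h\rangle)\cong\ZZ/2d$ is an isomorphism, and that its projection to $D(h^\perp)$ is exactly the cyclic group $\langle v^*\rangle$; concretely $H$ is the graph of the anti-isometry $h^*\mapsto -v^*$, where $h^*=(2d)^{-1}h$. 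This computation, together with the orthogonal splitting $D(h^\perp)=D(2U\op\langle-6\rangle)\op D(\langle-2d\rangle)$, is the technical heart of part (1).

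With $H$ in hand, Nikulin's extension criterion applies: $\bar g\in\opn{O}(h^\perp)$ lifts to an element of $\opn{O}(L)$ fixing $h$ if and only if $\opn{id}_{\langle h\rangle}\op\bar g$ preserves $H$. Because the action on $D(\langle h\rangle)$ is trivial, this reduces to $\bar g$ fixing $\langle v^*\rangle$ pointwise, that is, to $\bar g v^* = v^* + L_{6,2d}$. Since the lift is unique (injectivity of $r$), this identifies $\opn{O}(L,h)$ with $\{g\in\opn{O}(L_{6,2d})\mid g v^* = v^* + L_{6,2d}\}$, proving (1). I expect the hypothesis $d>2$ to be used only to secure the split identification $h^\perp\cong L_{6,2d}$ and to single out $v$ as the distinguished generator.

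For the final assertion, suppose $d=p^2$ with $p$ an odd prime and put $w = v/p \in L_{6,2d}^\vee$, so that $w^2=-2$ and $M:=2U\op\langle-6\rangle\op\langle w\rangle\cong L_{6,2}$ contains $L_{6,2d}$ with index $p$. Given $g$ in the group of part (1), $g$ preserves $L_{6,2d}\subseteq M$, so the $2U\op\langle-6\rangle$ part is automatically sent into $M$, and it remains to check $g(w)\in M$. Writing $g(v)=a+mv$ with $a\in 2U\op\langle-6\rangle$ and $m\in\ZZ$, the congruence $g v^* = v^* + L_{6,2d}$ becomes $a/(2d)+(m-1)v^*\in L_{6,2d}$; comparing the two orthogonal summands forces $a\in 2d\,(2U\op\langle-6\rangle)$, whence $g(w)=a/p+mw\in M$. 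Thus $g(M)=M$ and $g\in\opn{O}(M)=\opn{O}(L_{6,2})$. The main obstacle throughout is the explicit determination of the glue group and of $\langle v^*\rangle$ in part (1), together with the matching divisibility bookkeeping $a\in 2d\,(2U\op\langle-6\rangle)$ that makes $g(w)$ integral on $M$ in the last step.
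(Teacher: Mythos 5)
Your proposal is correct and takes essentially the same route as the paper: part (1) is the paper's own argument --- restrict to $h^{\perp}$, compute the glue group $H$ explicitly with $h=e+df$, $v=e-df$ (your graph description of $H$ matches the paper's generators $k_3'-k_1'$, $d(k_1'+k_3')$), and apply Nikulin's Corollary 1.5.2 to get the condition $g v^* \equiv v^* \bmod L_{6,2d}$. For part (2) you build the same index-$p$ overlattice $L_{6,2} \cong 2U \op \la -6 \ra \op \la v/p \ra \supset L_{6,2p^2}$ that the paper uses, and your divisibility bookkeeping ($a \in 2d\,(2U \op \la -6 \ra)$, $2d \mid m-1$) is just a direct unpacking of the paper's observation that $g$ fixes the isotropic subgroup $L_{6,2}/L_{6,2p^2} \subset \la v^* \ra + L_{6,2p^2}$ pointwise and therefore extends uniquely to the overlattice.
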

\begin{proof} 
(c.f. Proposition 3.12  of \cite{ModuliSpacesOfIrreducibleSymplecticManifolds}.)
By noting that $\opn{O}(L, h)$ acts on both $\la h \ra$ and $\la h \ra^{\perp}$, but trivially on $\la h \ra$,  we can immediately identify $\opn{O}(L, h)$ with a subgroup of $\opn{O}(L_{6,2d})$.

There is an inclusion of abelian groups
\begin{equation*}
L/(\la h \ra \op \la h \ra^{\perp})  \subset \la h \ra^{\vee} / \la h \ra \op (\la h \ra^{\perp})^{\vee} / (\la h \ra^{\perp}) = D(\la h \ra) \op D(\la h \ra^{\perp})
\end{equation*}
defined by the series of overlattices
\begin{equation*}
\la h \ra \oplus h^{\perp} \subset L \subset L^{\vee} \subset \la h^{\vee} \ra \oplus (h^{\perp})^{\vee},
\end{equation*}
and so the isotropic subgroup $H = L / ( \la h \ra \oplus h^{\perp})$ can be regarded as a subgroup of $D(\la h \ra) \op D( \la h \ra^{\perp})$. 
Let $p_h$ and $p_{h^{\perp}}$ denote the  corresponding projections $p_h: H \rightarrow D(\la h \ra)$, $p_{h^{\perp}}:H \rightarrow D(\la h \ra^{\perp})$.
By Proposition \ref{orbitclass}, we can assume that $h$ is given by $h = e_3 + d f_3 \in U \oplus \la -6 \ra$.
Let $k'_1 = (2d)^{-1}k_1$, $k'_2 = 6^{-1}k_2$, $k'_3 = (2d)^{-1}h$, and $k_1 = e_3 - df_3$,  where $k_2$ is a generator of the $\la -6 \ra$ factor of $L$.
Take a basis $\{e_1, f_1, e_2, f_2, k'_1, k'_2 \}$ for $(h^{\perp})^{\vee}$.
By direct calculation, $H = \la k_3' - k'_1, d(k_1' + k'_3) \ra +  \la h \ra \oplus h^{\perp}$, $p_{h^{\perp}}(H) = \la k_1' \ra$, and $D(h^{\perp}) = \la k'_1 \ra \oplus \la k'_2 \ra$.
By applying Corollary 1.5.2 of \cite{Nikulin}, 
\begin{equation*}
\opn{O}(L, h) \cong \{ g \in \opn{O}(h^{\perp}) \mid  g \vert _{p_{h^{\perp}}(H)} = \opn{id} \}, 
\end{equation*}
and the first part of the claim follows.

For the second part of the claim, embed $L_{6, 2p^2} \subset L_{6,2}$ by identifying factors of $2U \op \la -6 \ra$ and mapping
\begin{equation*}
L_{6,2p^2} \ni t + ak_1 \mapsto t + bu + apk \in L_{6,2},
\end{equation*} 
where $t \in 2U \op \la -6 \ra$, $k$ generates $\la -2 \ra \subset L_{6,2}$, and $a, b \in \ZZ$.
Define the totally isotropic subspace $M \subset D(L_{6,2p^2})$ by $M = L_{6, 2} / L_{6, 2p^2} \subset D(L_{6, 2p^2})$. 
By the above, if $g \in \opn{O}(L, h)$, then $g(k_1') = k_1'+ L_{6,2p^2}$. 
As $M \subset \la k_1' \ra + L_{6,2p^2} \subset D(L_{6,2p^2})$ and $g(L_{6,2p^2}) = L_{6,2p^2}$, then $g$ extends to a unique element of $\opn{O}(L_{6,2})$.
\end{proof}
Let $p$ be an odd prime.
We use an idea in \cite{Kondo} (who attributes it to O'Grady) to bound the index $\vert \opn{O}(L_{6, 2}) : \opn{O}(L_{6}, h_{2p^2}) \vert$.
This involves considering the finite quadratic space $\qc_p$, defined by
\begin{equation*}
\qc_p  = L_{6,2}/pL_{6,2} \subset L_{6,2p^2} / pL_{6,2},
\end{equation*}
and a number of classical results on orthogonal groups of finite type
(which can be found in \cite{Dieudonne}, but are stated below for the convenience of the reader).

For $i \in \NN$, let $H_i$ denote hyperbolic planes over $\FF_p$, and let $V_{\theta}$ denote the quadratic space $\la u, v \ra$ whose bilinear form is given by $(u,u)=1$, $(u, v)=0$, and $(v, v) = \theta$ where $-\theta \notin (\FF_q^*)^2$.

A non-degenerate quadratic space $V$ over a finite field $\FF_q$ of odd order $q$ is uniquely determined by  $\opn{dim} V$ and the discriminant $\Delta = \opn{det }B \in \FF_q^* / (\FF_q^*)^2$, where $B$ is the bilinear form on $V$. 

If $\opn{dim} V=2m$ and $\epsilon= (-1)^m \Delta \in \FF_q^*/(\FF_q^*)^2$, then $V$ is isomorphic to 
\begin{equation*}
\begin{cases} 
V_{\epsilon}^{2m}  = H_1 \oplus \ldots \oplus  H_m & \text{if $\epsilon =1$} \\
 V_{\epsilon} ^{2m} = V_{\theta} \oplus H_1 \oplus H_2  \oplus \ldots \oplus H_{m-1} & \text{if $\epsilon = -1$}.
\end{cases}
\end{equation*}
If $\opn{dim} V = 2m+1$, there is a single isomorphism class for $V$ given by 
\begin{equation*}
V^{2m+1}=H_1 \op \ldots H_m \op \la \theta \ra
\end{equation*}
for $0 \neq \theta \in \FF_q$.

We also need to know the order of $\opn{O}^+(V)$. 
As in \cite{Dieudonne},
\begin{equation*}
\begin{cases}
\vert \opn{O}^+(V^{2m+1})\vert = (q^{2m} -1) q^{2m-1} (q^{2m-2} - 1) \ldots (q^2 - 1) q \\
\vert \opn{O}^+(V_{\epsilon}^{2m}) \vert = (q^{2m-1} - \epsilon q^{m-1})(q^{2m-2}  - 1) q^{2m-3} \ldots (q^2 - 1 ) q.
\end{cases}
\end{equation*}
\begin{lem}\label{translemma} 
If $u, v \in \qc_p$ and $u^2 = v^2 \in \FF_p^*/(\FF_p^*)^2$ for $p>3$, then $u$ and $v$ are equivalent under $\opn{O}(L_{6,2})$.
\end{lem}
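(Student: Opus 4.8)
The plan is to pass to the finite orthogonal group $\opn{O}(\qc_p)$ and to reduce the statement to Witt's extension theorem together with surjectivity of reduction modulo $p$. First I would record that $\qc_p = L_{6,2}/pL_{6,2}$ is a nondegenerate quadratic space of dimension $6$ over $\FF_p$: since the determinant of $L_{6,2} = U^{\op 2} \op \la -6 \ra \op \la -2 \ra$ equals $12 = 2^2 \cdot 3$ and $p>3$, the prime $p$ does not divide the discriminant, so the reduced $\FF_p$-valued form is nondegenerate. This is precisely where the hypothesis $p>3$ enters: for $p=2,3$ the form on $\qc_p$ degenerates. Reduction then gives a homomorphism $\rho\colon \opn{O}(L_{6,2}) \to \opn{O}(\qc_p)$, and the lemma amounts to the assertion that the image of $\rho$ acts transitively on the non-isotropic lines of $\qc_p$ whose generators have a fixed class of square-length in $\FF_p^*/(\FF_p^*)^2$.

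Next I would split the argument into two ingredients. (i) Over $\FF_p$ with $p$ odd, Witt's extension theorem shows that $\opn{O}(\qc_p)$ acts transitively on vectors of any fixed nonzero length; consequently its orbits on non-isotropic lines are indexed exactly by the class of $u^2$ in $\FF_p^*/(\FF_p^*)^2$, since applying $g$ scales $u^2$ by a square. Thus, writing $u^2 = \mu^2 v^2$ for some $\mu \in \FF_p^*$, the vectors $u$ and $\mu v$ have equal length, so there is $g_0 \in \opn{O}(\qc_p)$ with $g_0 u = \mu v$. (ii) It then suffices that $\rho$ realise $g_0$, so that $g_0$ lifts to $g \in \opn{O}(L_{6,2})$ with $gu = \mu v$; that is, $u$ is equivalent to a scalar multiple of $v$, which (since only the line $\la \bar k \ra$ matters in the intended index computation) is the content of the claim.

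The substantive step, which I expect to be the main obstacle, is producing the required lifts, i.e. (enough of) the surjectivity of $\rho$. Here I would exploit the two copies of $U$ in $L_{6,2}$: for each integral isotropic vector $e$ (a standard basis vector of a hyperbolic plane) and each $a \in e^{\perp}$ the Eichler transvection $E_{e,a}$ is an honest isometry of $L_{6,2}$, and its reduction modulo $p$ is the corresponding Siegel transvection of $\qc_p$; together with the integral reflection $x \mapsto x + (x,k)k$ in the generator $k$ of $\la -2 \ra$ (which is integral since $k^2=-2$) and the evident $\opn{O}(U)$-symmetries, these reductions generate $\opn{O}(\qc_p)$ by the Cartan--Dieudonn\'e theorem. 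The delicate point is the bookkeeping needed to check that the lifted generators exhaust $\opn{O}(\qc_p)$ rather than only the spinor kernel $\Omega(\qc_p)$ or $\opn{SO}(\qc_p)$. In fact, since $\opn{dim}\qc_p = 6 \geq 3$, transitivity on vectors of a fixed nonzero length already holds for $\Omega(\qc_p)$, and the Siegel transvections coming from the Eichler transformations alone generate this subgroup; I would therefore organise the proof around producing sufficiently many Eichler transvections, which sidesteps any need to verify full surjectivity of $\rho$.
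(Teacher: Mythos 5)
Your overall strategy---reduce modulo $p$, invoke Witt's extension theorem together with transitivity of the spinor kernel $\Omega(\qc_p)$ on vectors of fixed nonzero length, and realise the resulting isometry in the image of $\rho\colon \opn{O}(L_{6,2}) \to \opn{O}(\qc_p)$ by lifting Siegel transvections to integral Eichler transvections---runs on the same engine as the paper's proof, which also works exclusively with the transvections $t(e,a)$ of $L_{6,2}$ and their reductions. Your two framing observations are accurate: nondegeneracy of $\qc_p$ (via $p \nmid 12$) is indeed where $p>3$ enters, and the lemma is in substance a statement about lines, since the paper's own proof likewise only produces $g$ with $g(\mu u) = \lambda v$ after rescaling, which is all that the index computation in Theorem \ref{finindexthm} requires.

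However, the load-bearing step is asserted rather than proved, and it is precisely the content of the lemma. Generation of $\Omega(\qc_p)$ by \emph{all} Siegel transvections is classical, but a transvection $E_{\bar e, \bar a}$ is only visibly in the image of $\rho$ when the isotropic line $\la \bar e \ra \subset \qc_p$ lifts to a primitive isotropic vector of $L_{6,2}$; that every isotropic line of $\qc_p$ so lifts is a nontrivial arithmetic claim (of Hensel/strong-approximation type) which your sketch never addresses, so the assertion that ``the Siegel transvections coming from the Eichler transformations alone generate this subgroup'' is exactly the open point. Your appeal to Cartan--Dieudonn\'e does not help here: it yields generation by reflections, and a reflection of $\qc_p$ in a vector of general square-class does not lift to an integral isometry of $L_{6,2}$ (integrality of $\sigma_r$ forces, up to the divisor of $r$, $r^2 \in \{\pm 1, \pm 2\}$). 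Two ways to close the gap: (a) cite Eichler's theorem that over a field of odd characteristic the Siegel transvections attached to a \emph{single} hyperbolic pair already generate $\Omega$; the reductions $\bar e_2, \bar f_2$ of a standard basis of a copy of $U \subset L_{6,2}$ supply such a pair, and every $\bar a \in \bar e_2^{\perp}$ lifts to $e_2^{\perp}$ because $e_2^{\perp}$ is a primitive sublattice, hence surjects onto $\bar e_2^{\perp}$ modulo $p$; or (b) do what the paper does and prove the needed transitivity by hand: it reduces an arbitrary nonzero $x \in \qc_p$ to the normal form $(1,a,0,0,0,0)$ using $t(e_2,v_1)$, $t(e_2,v_2)$, permutations from $\opn{O}(2U)$, and Smith normal form via the $\opn{SL}(2,\ZZ) \times \opn{SL}(2,\ZZ)$ action on $2U$, and then exhibits an explicit product $t(e_2,v')t(f_2,w)t(e_2,u')$ carrying $\hat u$ to $\hat v$---thereby never invoking any generation theorem for finite orthogonal groups. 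As written, your proposal is a correct reduction of the lemma to an unproven (though true and citable) generation statement, and with route (a) made precise it would constitute a genuinely more structural alternative to the paper's computation.
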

\begin{proof}
Let $\{e_1, f_1, e_2, f_2, v_1, v_2 \}$ be a basis for $L_{6,2}$ where $v_1$, $v_2$ are the respective generators of $\la -6 \ra$, $\la -2 \ra$  and $\{e_i, f_i \}$ are standard bases for the two copies of $U$. 

We define elements of $\opn{O}(L_{6,2})$.
For $e \in L_{6,2}$ isotropic and $a \in e^{\perp} \subset L_{6,2}$ there are elements $t(e,a) \in \opn{O}(L)$ (known as \emph{Eichler transvections}) defined by
\begin{equation*}
t(e, a) : v \mapsto v - (a,v)e + (e,v)a - \frac{1}{2}(a,a)(e,v)e
\end{equation*}
(\cite{eichler} or \S3 of \cite{abelianisation}).
The action of $t(e_2, v_1)$ and $t(e_2, v_2)$ on $w = (w_1, w_2, w_3, w_4, w_5, w_6) \in L_{6,2}$ are given by  
\begin{equation*}
\begin{cases}
t(e_2, v_1): w \mapsto (w_1, w_2, w_3 + 3w_4 + 6w_5, w_4, w_5 + w_4, w_6  ) \\
t(e_2, v_2): w \mapsto (w_1, w_2, w_3 + w_4 + 2w_6, w_4, w_5, w_6 + w_4).
\end{cases}
\end{equation*}
One can also obtain elements of $\opn{O}(L_{6,2})$ by the trivial extension of elements in $\opn{O}(2U)$ for an embedding $2U \subset L_{6,2}$.
In particular, if $(w,x,y,z)$ is taken on the standard basis $\{e_i, f_i\}$ of $U \op U$ then the map
\begin{equation}\label{sliso}
U \op U \ni (w,x,y,z) \mapsto 
\begin{pmatrix}
 w & -y \\ 
z & x 
\end{pmatrix},
\end{equation}
identifies $M_2(\ZZ)$ with $U \op U$ (where the inner product on $M_2(\ZZ)$ is defined by $\opn{det}$). 
An element $(A, B) \in \opn{SL}(2, \ZZ) \times \opn{SL}(2, \ZZ)$ defines an element in $\opn{O}(U \op U)$ by the mapping
\begin{equation*}
(A, B) : 
\begin{pmatrix}
w & -y \\ 
z & x 
\end{pmatrix} 
\mapsto 
A 
\begin{pmatrix} 
w & -y \\ 
z & x 
\end{pmatrix}
B^{-1}.
\end{equation*}

Let $x = (x_1, x_2, x_3, x_4, x_5, x_6) \in L_{6,2} / p L_{6,2}$ be non-zero. 
We can assume that $x_4 \neq 0$ by (if required) applying $t(e_2, v_1)$ or $t(e_2, v_2)$, or permuting $\{x_1, x_2, x_3, x_4 \}$ by elements in $\opn{O}(2U)$. 
By rescaling $x$ so that $x_4 = 1$, and by repeated application of $t(e_2, v_1)$ and $t(e_2, v_2)$, the element $x$ can be transformed to an element of the form $(x_1',x_2',x_3',x_4', 0, 0)$, and so can be identified with an element of $2U$. 
Because of the existence of a Smith normal form for the associated matrix \eqref{sliso}, $x$ can be mapped to an element of the form $(r, s, 0,0,0,0)$ by using the image of $\opn{SL}(2, \ZZ) \times \opn{SL}(2, \ZZ)$ in $\opn{O}(2U)$.
One can assume $x$ is of the form $(1, a, 0,0,0,0)$ by rescaling (if necessary).

Now suppose $u, v \in L_{6,2} / p L_{6,2}$ are given by $u=(1, a, 0,0,0,0)$ and $v=(1, b, 0,0,0,0)$.
If $ab^{-1} \in (\FF_p^*)^2$, then there exists $\mu, \lambda \in \FF_p$ such that $(\mu u)^2 = (\lambda v)^2$.  
Define $\hat{u}$ and $\hat{v}$ by $\hat{u}=\mu u = (u_1, u_2, 0,0,0,0)$, $\hat{v}=\lambda v = (v_1, v_2, 0,0,0,0)$ and suppose (without loss of generality) that $\hat{u} - \hat{v} = (r,s,0,0,0,0)$ is non-zero.
Take representatives for $r,s$ modulo $p$, and let
\begin{equation*}
d = 
\begin{cases}
r & \text{if $s=0$}\\
s & \text{if $r=0$} \\
\opn{gcd}(r,s) & \text{otherwise}.
\end{cases}
\end{equation*}
If $r_1, r_2, s_1, s_2$ are solutions to $r_2 u_1 + r_1 u_2 = d$ and  $s_2 v_1 + y_2 v_2 = d$ taken modulo $p$, define the elements $u', v', w \in e_1^{\perp} \cap f_1^{\perp} \subset L_{6,2}$  by $u'=(r_1, r_2, 0,0,0,0)$, $v'=(s_1, s_2, 0,0,0,0)$, and $w=(d^{-1}r, d^{-1}s, 0,0,0,0)$ where $r' \equiv r \bmod{p}$ and $s' \equiv s \bmod{p}$.
Then, over $\FF_p$, $(\hat{u}, u') = d$, $(\hat{v}, v) = d$, and $t(e_2, v')t(f_2, w)t(e_2, u'): \hat{u} \mapsto \hat{v}$.
The result follows. 
\end{proof}
\begin{thm}\label{finindexthm}
Let $p>3$ be prime.
Then $\opn{O}^+(L, h_{2p^2})$ is of finite index in $\opn{O}^+(L , h_2)$ and   
$\vert  \opn{O}^+(L , h_2) : \opn{O}^+(L , h_{2p^2}) \vert \leq 16 (p^5 + p^2)$.
\end{thm}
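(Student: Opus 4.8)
The plan is to convert the index $|\opn{O}^+(L,h_2):\opn{O}^+(L,h_{2p^2})|$ into an orbit count in the finite quadratic space $\qc_p = L_{6,2}/pL_{6,2}$, and to estimate that orbit using the orders of finite orthogonal groups recorded above. By Proposition \ref{modgroup} both $\opn{O}(L,h_2)$ and $\opn{O}(L,h_{2p^2})$ are realised as subgroups of $\opn{O}(L_{6,2})$ with $\opn{O}(L,h_{2p^2}) \leq \opn{O}(L,h_2)$, so $|\opn{O}(L,h_2):\opn{O}(L,h_{2p^2})| \leq |\opn{O}(L_{6,2}):\opn{O}(L,h_{2p^2})|$; and since $\opn{O}^+$ is the kernel of the real spinor norm, it has index at most $2$ in $\opn{O}$, so the passage from the integral groups to their spinor-kernels costs only a bounded factor. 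It therefore suffices to bound $|\opn{O}(L_{6,2}):\opn{O}(L,h_{2p^2})|$ up to a constant.

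Let $k$ generate the $\la -2 \ra$ factor of $L_{6,2}$ and let $\bar k$ be its image in $\qc_p$. The embedding $L_{6,2p^2} \subset L_{6,2}$ of Proposition \ref{modgroup} (sending the generator of $\la -2p^2 \ra$ to $pk$) satisfies $(L_{6,2p^2}+pL_{6,2})/pL_{6,2} = \bar k^{\perp}$, so $g \in \opn{O}(L_{6,2})$ preserves $L_{6,2p^2}$ exactly when its reduction fixes the line $\la \bar k \ra$; as $\bar k^2 = -2 \neq 0$ is preserved, such a $g$ acts on $\bar k$ by $\pm 1$. Tracing Proposition \ref{modgroup}, $\opn{O}(L,h_{2p^2})$ is precisely the set of such $g$ whose restriction to $L_{6,2p^2}$ fixes $v^* = k/(2p) \in D(L_{6,2p^2})$, so in particular it lies inside the stabiliser $\opn{O}(L_{6,2})_{\bar k}$ of the \emph{vector} $\bar k$. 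I would then factor
\begin{equation*}
|\opn{O}(L_{6,2}):\opn{O}(L,h_{2p^2})| = |\opn{O}(L_{6,2}):\opn{O}(L_{6,2})_{\bar k}| \cdot |\opn{O}(L_{6,2})_{\bar k} : \opn{O}(L,h_{2p^2})|.
\end{equation*}

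For the first factor, Lemma \ref{translemma} together with $-\opn{id} \in \opn{O}(L_{6,2})$ shows that the image of $\opn{O}(L_{6,2})$ in $\opn{O}(\qc_p)$ acts transitively on vectors of square $-2$, so this factor equals the size of that orbit. The stabiliser of $\bar k$ in $\opn{O}(\qc_p)$ is the orthogonal group of the nondegenerate $5$-dimensional space $\bar k^{\perp}$, so orbit--stabiliser and the quoted orders give $|\opn{O}(\qc_p)|/|\opn{O}(\bar k^{\perp})| = p^5 - \epsilon p^2$ for some $\epsilon \in \{\pm 1\}$, which is at most $p^5 + p^2$. The second factor is the orbit of $v^*$ under $\opn{O}(L_{6,2})_{\bar k}$ acting on $D(L_{6,2p^2}) = \ZZ/6 \op \ZZ/2p^2$: any image must have the same order $2p^2$ and the same value $q(v^*) = -1/(2p^2)$ of the discriminant form, and writing a candidate as $au + cv^*$ and reducing the condition on $q$ modulo $p^2$ forces $c \equiv \pm 1$, leaving only $O(1)$ possibilities. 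Hence the residual factor is bounded independently of $p$, and collecting it with the $\opn{O}^+$-versus-$\opn{O}$ index into the constant yields $|\opn{O}^+(L,h_2):\opn{O}^+(L,h_{2p^2})| \leq 16(p^5+p^2)$.

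The main obstacle I expect is the control of the residual factor and the honest bookkeeping of the bounded constants. One must verify that the mod-$2p^2$ condition $gv^*=v^*$ defining $\opn{O}(L,h_{2p^2})$ cuts the mod-$p$ vector stabiliser down by only a bounded amount—this is the discriminant-form computation forcing $c\equiv\pm1$—and that the passages between $\opn{O}$ and $\opn{O}^+$ and between the various groups contribute only constants that together fit inside the factor $16$. The orbit count itself, by contrast, is a routine application of Lemma \ref{translemma} and the displayed formulas for $|\opn{O}^+(V)|$.
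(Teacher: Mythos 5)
Your proposal is correct and follows essentially the same route as the paper: reduce modulo $p$ to the finite quadratic space $\qc_p$, use Lemma \ref{translemma} plus orbit--stabiliser to identify the dominant factor with the ratio of Dieudonn\'e's group orders $p^5 - \epsilon p^2 \leq p^5 + p^2$, and absorb the mod-$2p^2$ condition on $v^*$ (together with the $\opn{O}$-versus-$\opn{O}^+$ passages) into the bounded constant. The only cosmetic differences are that the paper stabilises the hyperplane $\Pi = \bar{k}^{\perp} = L_{6,2p^2}/pL_{6,2}$ rather than the vector $\bar{k}$, and bounds the residual factor via the sandwich $\widetilde{\opn{O}}^+(L_{6,2p^2}) \leq \opn{O}^+(L,h_{2p^2}) \leq \opn{O}^+(L_{6,2p^2})$ with $\vert \opn{O}(L_{6,2p^2}) : \widetilde{\opn{O}}(L_{6,2p^2}) \vert = \vert \opn{O}(D(L_{6,2p^2})) \vert = 8$, rather than by your explicit count of possible images of $v^*$ in the discriminant group.
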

\begin{proof}
There is a natural homomorphism $\opn{O}(L_{6,2}) \rightarrow \opn{O}(L_{6,2}/pL_{6,2})$.
If $v, w \in \qc_p$ and $v^2 = w^2 \bmod{ (\FF_p^*)^2}$ then, by Lemma \ref{translemma}, $v \sim w$ under the action of $\opn{O}(L_{6,2})$ and so $v \sim w$ under the action of $\opn{O}(L_{6,2}/pL_{6,2})$.
The group $\opn{O}(L, h_{2p^2}) \subset \opn{O}(L_{6,2})$ stabilises a hyperplane $\Pi \subset Q_p$ and, as $\opn{Stab}_{\opn{O}(L_{6,2} /p L_{6,2})}(\Pi) = \opn{O}(L_{6,2p^2} /p L_{6,2})$, by the orbit-stabiliser theorem, 
\begin{equation*}
\vert \opn{O}(L_{6,2}) : \opn{Stab}_{ \opn{O}(L_{6,2})} (\Pi) \vert = \vert \opn{O}(L_{6,2} /p L_{6,2}) : \opn{O}(L_{6,2p^2} /p L_{6,2}) \vert
\end{equation*}
and
\begin{equation*}
\vert \opn{O}^+(L_{6,2}) : \opn{Stab}_{ \opn{O}^+(L_{6,2})} (\Pi) \vert = \vert \opn{O}^+(L_{6,2} /p L_{6,2}) : \opn{O}^+(L_{6,2p^2} /p L_{6,2}) \vert.
\end{equation*}
By Lemma~\ref{modgroup}, $\opn{O}(L, h_{2p^2}) \subset \opn{O}(L_{6,2})$ and so, 
\begin{equation*}
\widetilde{\opn{O}}^+(L_{6,2p^2}) \leq \opn{O}^+(L, h_{2p^2}) \leq \opn{Stab}_{ \opn{O}^+(L_{6,2})}(\Pi) \leq \opn{O}^+(L_{6,2p^2}).
\end{equation*}
As $\opn{O}(D(L_{6,2p^2})) \cong C_2^{\op 3}$, then
\begin{equation}\label{stabindex}
\vert \opn{Stab}_{ \opn{O}(L_{6,2})}(\Pi) : \opn{O}(L, h_{2p^2}) \vert \leq \vert \opn{O}(L_{6,2p^2}) : \widetilde{\opn{O}}(L_{6,2p^2}) \vert = 8,
\end{equation}
and so,
\begin{align*}
 \vert \opn{O}^+(L_{6,2}) : \opn{O}^+(L, h_{2p^2}) \vert 	
& \leq 8 \vert \opn{O}^+(L_{6,2} /p L_{6,2}) : \opn{O}^+(L_{6,2p^2} /p L_{6,2}) \vert \\
& \leq 8 \frac{(p^5 - \epsilon p^2)(p^4 -1)p^3(p^2 -1)p }{(p^4-1)p^3(p^2 -1) p} \\
& \leq 8 (p^5 + p^2).
\end{align*}
\end{proof}
\section{Toroidal compactifications}
\subsection{Overview}
Toroidal compactifications are defined fully in \cite{AMRT}.
We state only the results we require, following  \cite{Handbook}.
Let $\fc$ denote the orthogonal modular variety $\dc_M/\Gamma$, where $M$ is a lattice of signature $(2, n)$ and (without loss of generality) $\Gamma$ is a neat normal subgroup of $\opn{O}^+(M)$.
There is a partial compactification $\dc_M^*$ of $\dc_M$ (taken in the compact dual $\dc_M^{\vee}$) to which the action of $\Gamma$ extends.
The compactification $\dc_M^*$ admits a decomposition  
\begin{equation}\label{dm*dec}
\dc_M^* = \dc_M \sqcup \bigsqcup_{\Pi} F_{\Pi} \sqcup \bigsqcup_{\ell} F_{\ell},
\end{equation}
where the rational boundary components $F_{\Pi}$ (\emph{boundary curves}) and $F_{\ell}$ (\emph{boundary points}) are symmetric spaces corresponding to $\Gamma$-equivalence classes of totally isotropic planes and isotropic lines in $M \otimes \QQ$. 

For a rational boundary component $F$ (given by some $F_{\ell}$ or $F_{\Pi}$ in \eqref{dm*dec}), let $N(F) \subset \opn{O}(2, n)$ be the stabiliser of $F$, $W(F)$ be the unipotent radical of $N(F)$, and $U(F)$ be the centre of $W(F)$.
Denote the intersections of $N(F)$, $U(F)$, and $W(F)$ with $\Gamma$ by $N(F)_{\ZZ}$, $U(F)_{\ZZ}$, and $W(F)_{\ZZ}$.
Let $\dc_M(F)$ denote the domain $U(F). \dc_M \subset \dc_M^{\vee}$.
Because of the Langlands decomposition for the parabolic subgroup $N(F)$, the domain $\dc_M(F)$ decomposes as  
\begin{equation*}
\dc_M(F) = F \times V(F) \times U(F)_{\CC},
\end{equation*}
where $V(F)$ is the complex vector space $W(F)/U(F)$.

If $\dc_M(F)'$ is the quotient $\dc_M(F)'=\dc_M(F)/U(F)_{\CC}$, then the spaces $\dc_M(F)$, $\dc_M(F)'$, and $F$ are related by the diagram
\begin{equation*}
\begin{tikzpicture}
\matrix (m) [matrix of math nodes,row sep=3em,column sep=4em,minimum width=2em]
{
\mathcal{D}_M(F) & \\
& \mathcal{D}_M(F)'\\
F & \\
};
\path[-stealth]
(m-1-1) edge node [above] {$\pi_F'$}  (m-2-2) 
(m-2-2) edge node [above] {$p_F$} (m-3-1)
(m-1-1) edge node [left] {$\pi_F$} (m-3-1)
;
\end{tikzpicture}
\end{equation*}  
where $\pi_F$, $p_F$, and $\pi_F'$ are the natural projections onto $F$, $F$, and $\dc_M(F)$, respectively. 

The space 
\begin{equation}\label{phom}
\pi_F':\dc_M(F) \rightarrow \dc_M(F)'
\end{equation}
is a principal homogeneous space for $U(F)_{\CC}$ and admits an $N(F)_{\ZZ}$-action.
By taking the quotient of \eqref{phom} by $U(F)_{\ZZ} \subset N(F)_{\ZZ}$, one obtains the principal fibre bundle
\begin{equation}\label{fbundle}
\dc_M(F) / U(F)_{\ZZ} \rightarrow \dc_M(F)'
\end{equation}
whose fibre is equal to the algebraic torus $T(F):=U(F)_{\CC} / U(F)_{\ZZ}$.  

There is a real cone $C(F) \subset U(F)$.
By taking a fan $\Sigma$ in the closure of $C(F)$ and then replacing the torus $T(F)$ in the fibre bundle \eqref{fbundle} with the toric variety $X_{\Sigma(F)}$ one obtains a new bundle over $\dc_M(F)$ with fibre $X_{\Sigma(F)}$.

One constructs a partial compactification $\fc(F)^{tor}$ for $\fc$ in a neighbourhood of $F$ by taking the closure of $\dc_M/U(F)_{\ZZ}$ in the new bundle, and then taking the quotient by $N(F)_{\ZZ}$.
A toroidal compactification for $\fc$ is obtained by taking the set of partial compactifications over all rational boundary components $F$ and gluing by identifying the copies of $\fc$ contained in each one. 
\subsection{Invariants associated with $F$}
\begin{defn}
If $M$ is a lattice and $E \subset M$ is a primitive totally isotropic sublattice, let $H_E := E^{\perp \perp}/E \subset D(M)$ where $E^{\perp \perp} \subset M^{\vee}$. 
\end{defn}
\begin{lem}\label{isobasisp}
Let $E \subset L_{6,2p^2}$ be a primitive totally isotropic sublattice of rank $2$ corresponding to the boundary component $F$. 
Then there exists a $\ZZ$-basis $\{v_1, \ldots, v_6 \}$ of $L_{6,2p^2}$ so that $\{v_1, v_2\}$ is a basis for $E$ and $\{v_1, \ldots, v_4 \}$ is a basis for $E^{\perp}$. 
Furthermore, the basis can be chosen so that the Gram matrix
\begin{equation}\label{eqnform}
Q 
= 
((v_i, v_j)) 
= 
\begin{pmatrix} 
0 & 0 & A \\ 
0 & B & C \\ 
{}^tA & {}^tC & D 
\end{pmatrix}
\end{equation}
where
\begin{equation*}
A = 
\begin{pmatrix} 
a_1 & 0 \\ 
0 & a_1 a_2 
\end{pmatrix},
\end{equation*}
$a_1$, $a_2$ are the elementary divisors of the group $D(L_{6,2p^2}) / H_E^{\perp}$, and $B$ is the quadratic form on $E^{\perp} /E$.

Moreover, 
\begin{equation*}
  (a_1, a_1a_2) \in \{ (1,1), (1,p), (1,2p) \}.
\end{equation*}
\end{lem}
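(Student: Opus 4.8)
The plan is to build the flag basis first, reduce the whole statement to a single determinant identity, and then treat the exclusion of $a_1a_2=2$ as a separate, finer problem.

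First I would construct the basis. Since $E$ is primitive in $L_{6,2p^2}$ and totally isotropic we have $E\subset E^{\perp}$, with $E$ primitive in $E^{\perp}$ and $E^{\perp}$ primitive in $L_{6,2p^2}$ (orthogonal complements are always primitive). Choosing a basis $\{v_1,v_2\}$ of $E$, extending it to a basis $\{v_1,\dots,v_4\}$ of $E^{\perp}$, and then to a basis $\{v_1,\dots,v_6\}$ of $L_{6,2p^2}$ realises the required flag. Total isotropy of $E$ forces $(v_i,v_j)=0$ for $i,j\le 2$, and $E\perp E^{\perp}$ forces $(v_i,v_j)=0$ for $i\le 2<j\le 4$; this yields the block shape \eqref{eqnform}, with $B$ the Gram matrix of $E^{\perp}/E$ and $A=((v_i,v_{4+j}))_{i,j}$. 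Nondegeneracy of $L_{6,2p^2}$ makes $A$ nonsingular. Acting on $\{v_1,v_2\}$ and on $\{v_5,v_6\}$ by $\opn{GL}_2(\ZZ)$ preserves the flag, and (since $E\perp E^{\perp}$) adding elements of $E^{\perp}$ to $v_5,v_6$ leaves $A$ unchanged; hence $A$ transforms as $A\mapsto UAV$ and Smith normal form lets me take $A=\mathrm{diag}(a_1,a_1a_2)$ with $a_1\mid a_1a_2$. A short dual-basis computation gives $E^{\perp\perp}=\la v_5^{*},v_6^{*}\ra$ with $v_5^{*}=a_1^{-1}v_1$ and $v_6^{*}=(a_1a_2)^{-1}v_2$, so $H_E=\la \overline{a_1^{-1}v_1},\,\overline{(a_1a_2)^{-1}v_2}\ra$; comparing with the gluing description in \cite{Nikulin} identifies $a_1,a_2$ with the elementary divisors of $D(L_{6,2p^2})/H_E^{\perp}$.

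The arithmetic then flows from one determinant identity. Expanding \eqref{eqnform} along the isotropic block gives $\det Q=\pm(\det A)^2\det B$. As $K=E^{\perp}/E$ has signature $(0,2)$ it is negative definite, so $\det B=|D(K)|>0$ and
\begin{equation*}
(\det A)^2\,|D(K)| \;=\; |D(L_{6,2p^2})| \;=\; 12p^2 .
\end{equation*}
Reading this prime by prime forces $a_1=1$ (no prime divides $12p^2$ to the fourth power) and $a_1a_2=\det A\in\{1,2,p,2p\}$, which is already most of the statement.

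The hard part will be excluding $a_1a_2=2$, and I expect a naive count to be insufficient: $D(L_{6,2p^2})=\ZZ/6\op\ZZ/2p^2$ does contain an isotropic element of order two, namely $3e^{*}+p^2f^{*}$ where $e^{*},f^{*}$ generate the cyclic summands, and the induced form on $H_E^{\perp}/H_E$ still satisfies Milgram's signature congruence, so order- and signature-considerations alone leave $a_1a_2=2$ open. To rule it out I would use the standing hypothesis that $L_{6,2p^2}=h^{\perp}$ for a split $h$ of degree $2p^2$ together with Proposition \ref{modgroup}: the embedding $L_{6,2p^2}\subset L_{6,2}$ and the containment $\opn{O}(L,h_{2p^2})\le\opn{O}(L_{6,2})$ mean that a plane relevant to the boundary of $\fc_{2p^2}$ must be compatible with the glue $M=L_{6,2}/L_{6,2p^2}$, and I would track the $2$-adic component of the generator of $H_E$ under this structure. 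The expectation is that the order-two glue $3e^{*}+p^2f^{*}$ is precisely the one whose $\la -2p^2\ra$-contribution is not visible in $D(L_{6,2})=\ZZ/6\op\ZZ/2$, so that it cannot arise from the smaller group, leaving only $a_1a_2\in\{1,p,2p\}$. This compatibility step is where I anticipate the genuine work; the flag construction, the Smith normalisation of $A$, and the determinant bound are all essentially formal by comparison.
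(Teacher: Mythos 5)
Your construction of the flag basis, the Smith normalisation of $A$, and the identification $H_E \cong C_{a_1} \op C_{a_1a_2} \cong D(L_{6,2p^2})/H_E^{\perp}$ are all correct and agree with the paper. Your determinant identity $(\det A)^2\det B = 12p^2$ is a genuinely different and more direct route to $a_1 = 1$ and $a_1a_2 = \det A \in \{1,2,p,2p\}$: the paper instead solves $p^2x^2 + 3y^2 \equiv 0 \bmod{12p^2}$ to list the isotropic elements of $D(L_{6,2p^2})$ and computes $H_E^{\perp}$ case by case, which yields the explicit subgroups but is longer. Up to this point your argument is sound, and your observation that Milgram's congruence survives the quotient by an isotropic subgroup (so cannot rule anything out) is also correct.

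The gap is the step you deferred, and it cannot be closed along the lines you propose, because the case $a_1a_2 = 2$ actually occurs. The boundary curves of $\fc_{2p^2}$ correspond to orbits of \emph{all} primitive totally isotropic planes in $L_{6,2p^2}$; Proposition \ref{modgroup} constrains the group $\opn{O}(L,h_{2p^2})$, not which planes exist, and no compatibility with the glue $M = L_{6,2}/L_{6,2p^2}$ is imposed on $E$. Concretely, write $L_{6,2p^2} = U_1 \op U_2 \op \la -6 \ra \op \la -2p^2 \ra$ with standard bases $\{e_i, f_i\}$ and generators $u$, $v$, and set $m = (3+p^2)/4$, an (odd) integer since $p^2 \equiv 1 \bmod{8}$. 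Then $w = 2e_1 + 2mf_1 + u + v$ satisfies $w^2 = 8m - 6 - 2p^2 = 0$, and $E = \la e_2, w \ra$ is a primitive totally isotropic plane with $w/2 = e_1 + mf_1 + u/2 + v/2 \in L_{6,2p^2}^{\vee}$; one checks $(E \otimes \QQ) \cap L_{6,2p^2}^{\vee} = \la e_2, w/2 \ra$, so $H_E = \la \overline{w/2} \ra = \la (3,p^2) \ra \cong C_2$ --- exactly the order-two isotropic class you call $3e^{*} + p^2f^{*}$ --- whence $D(L_{6,2p^2})/H_E^{\perp} \cong C_2$ and $(a_1, a_1a_2) = (1,2)$. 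Consistently with your identity, $E^{\perp}/E \cong \bsm -2m & 3 \\ 3 & -6 \esm$ has determinant $3p^2 = 12p^2/(\det A)^2$. So the ``Moreover'' clause excluding $(1,2)$ is not provable as stated, and your anticipated glue-tracking argument would be chasing an obstruction that does not exist.

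It is worth recording that the paper's own proof has the same blind spot, at the same place: its case analysis runs over $H_E \in \{0, \la x_1 \ra, \la x_2 \ra, \la x_1, x_2 \ra\}$ with $x_1 = (0,2p)$ and $x_2 = (3,p)$, but since $2x_2 = x_1$ the isotropic subgroups of $D(L_{6,2p^2})$ are precisely the four subgroups of $\la x_2 \ra \cong C_{2p}$, namely $0$, $\la px_2 \ra = \la (3,p^2) \ra \cong C_2$, $\la x_1 \ra \cong C_p$, and $\la x_2 \ra$; the subgroup $\la px_2 \ra$, generated by an \emph{imprimitive} isotropic element, is omitted from the paper's list. Your instinct that the order-two glue is where the genuine difficulty lies was therefore sharper than the paper's treatment --- but the correct resolution is that $(a_1,a_1a_2) = (1,2)$ must be added to the conclusion (with knock-on effects on $N$, $\det B$, and Table \ref{invstab}), not excluded.
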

\begin{proof}
As the lattices $E$ and $E^{\perp}$ are primitive in $L_{6,2p^2}$, the existence of a basis on which $Q$ assumes the form of \eqref{eqnform} is immediate. 
The Smith normal form of the matrix $A$ embeds $\la v_5, v_6 \ra$ in the dual $\la v_5^*, v_6^* \ra$.
Therefore, the elementary divisors of $A$ correspond to the elementary divisors of the abelian group $\la v_5^*, v_6^* \ra / \la v_5, v_6 \ra$ (c.f. \cite{GHSK3}). 
  
If $H_E = E^{\perp \perp}/E \subset D(L_{6,2p^2})$, then  $H_E^{\perp} + L_{6,2p^2} = \la v_1^*, \ldots, v_4^* \ra$ in $D(L_{6,2p^2})$, and so  $\la v_5^*, v_6^* \ra / \la v_5, v_6 \ra \cong D(L_{6,2p^2}) / H_E^{\perp}$. 

As $E$ is totally isotropic in $L_{6,2p^2}$, then $H_E$ is totally isotropic in $D(L_{6,2p^2})$. 
If $D(L_{6,2p^2})$ is identified with $((-1/6) \op (-1/2p^2), C_6 \op C_{2p^2})$, then $(x, y) \in D(L_{6,2p^2})$ is isotropic if and only if
\begin{equation*}
p^2x^2 + 3y^2 = 0 \mod{12 p^2}.
\end{equation*}
As $(3,p)=1$ then $p \vert y$, $p^2x^2 + 3p^2y_1^2 = 0 \mod{12p^2}$, and $x^2 + py_1^2 = 0 \mod{6}$.
  
By considering squares modulo 6, we conclude that $x = 0$ or $3$, and that $x$ and $y$ must have equal parities. 
Therefore, the isotropic elements of $D(L_{6,2p^2})$ are given by the set 
\begin{equation*}
  (x, y) \in \{ (0,2kp), (3, (2k+1)p) \mid k \in \ZZ \};
\end{equation*}
the primitive isotropic subspaces of rank 1 in $D(L_{6,2p^2})$ are generated by $x_1 := (0, 2p)$ and $x_2 := (3, p)$;
and the single primitive totally isotropic subspace of rank 2 is generated by $\la x_1, x_2 \ra$. 

If $H_E = \la x_1 \ra$ then 
\begin{equation*}
  H_E^{\perp} = \{ (a,b) \in D(L_{6,2p^2}) \mid 6b \equiv 0 \mod{6p} \},
\end{equation*}
and so, $p \vert b$ and $H_E^{\perp} = \la (1,0), (0,p) \ra \cong C_6 \op C_{2p}$.

If $H_E = \la x_2 \ra$ then 
\begin{equation*}
  H_E^{\perp} = \{ (a,b) \in D(L_{6,2p^2}) \mid pa + b \equiv 0 \mod{2p} \},
\end{equation*}
and so, 
$p \vert b$, 
$2 \vert (a+b)$,
and $H_E^{\perp} = \la (1,p), (2,0) \ra$. 

If $y_1 = (1,p)$ and $y_2 = (2,0)$, we also have the relations 
$6py_1 = 0$,
$3y_2 = 0$,
and $p(2y_1-y_2) = 0$. 
As $p \equiv \pm 1 \bmod{6}$, then 
$2py_1 = \pm y_2$  
and so $H_E^{\perp} = \la y_1 \ra = \la (1,p) \ra \cong C_3 \op C_{2p}$. 

If $H_E = \la x_1, x_2 \ra$, then $H_E^{\perp} = \la y_1 \ra = \la (1,p) \ra \cong C_3 \op C_{2p}$. 

We conclude that,
\begin{enumerate}
\item if $H_E = \{0 \}$, then $H_E^{\perp}  = D(L_{6,2p^2})$ and $D(L_{6,2p^2})/H_E^{\perp} \cong \{0\}$; 
\item if $H_E = \la x_1 \ra$, then $H_E^{\perp} = \la (1,0), (0,p) \ra \cong C_6 \op C_{2p}$ and $D(L_{6,2p^2})/H_E^{\perp} \cong  C_p$; 
\item if $H_E = \la x_2 \ra$, then $H_E^{\perp} = \la (1,p) \ra \cong C_3 \op C_{2p}$ and $ D(L_{6,2p^2})/H_E^{\perp} \cong C_2 \op C_{p}$; 
\item if  $H_E = \la x_1, x_2 \ra$, then $H_E^{\perp} = \la (1,p) \ra \cong C_3 \op C_{2p}$ and $D(L_{6,2p^2})/H_E^{\perp} \cong C_2 \op C_{p}$,
\end{enumerate}
and the result follows.
\end{proof}
\begin{lem}\label{Qbasis}
There exists a basis $\{v_1, \ldots, v_6\}$ for $L_{6,2p^2} \otimes \QQ$ so that $\{v_1, v_2\}$ is a $\ZZ$-basis for $E$, $\{v_1, \ldots, v_4\}$ is a $\ZZ$-basis for $E^{\perp}$, and 
\begin{equation*}
  Q = ((v_i, v_j)) =
  \begin{pmatrix} 
    0 & 0 & A \\ 
    0 & B & 0 \\ 
    A & 0 & 0 
  \end{pmatrix}
\end{equation*}
where $A$ and $B$ are as in Lemma \ref{isobasisp}.
\end{lem}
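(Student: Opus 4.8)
The plan is to keep the integral vectors $v_1,\ldots,v_4$ of Lemma~\ref{isobasisp} fixed and to replace $v_5,v_6$ by rational vectors $v_5',v_6' \in L_{6,2p^2}\otimes\QQ$ chosen so as to annihilate the off-diagonal block $C$ and the corner block $D$ while leaving $A$ and $B$ intact. Passing to $\QQ$-coefficients is the essential point: the corrections involve inverting $A$ and $B$, which need not be possible over $\ZZ$. Because $v_1,\ldots,v_4$ are unaltered, the conditions that $\{v_1,v_2\}$ be a $\ZZ$-basis of $E$ and $\{v_1,\ldots,v_4\}$ a $\ZZ$-basis of $E^{\perp}$ are preserved automatically, and the change of basis will be unipotent (identity plus entries of the $v_5,v_6$ rows), so a $\QQ$-basis is retained.

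First I would clear $C$. Recall $v_1,v_2$ span the totally isotropic $E$, that $v_3,v_4$ carry the Gram matrix $B$ on $E^{\perp}/E$, and that $v_1,v_2$ are orthogonal to $v_3,v_4$. Writing $v_5''=v_5+\beta_1 v_3+\beta_2 v_4$ and $v_6''=v_6+\beta_3 v_3+\beta_4 v_4$, the pairings $(v_5'',v_j)$, $(v_6'',v_j)$ for $j\in\{3,4\}$ become affine-linear in the $\beta$'s with coefficient matrix $B$. Since $L_{6,2p^2}$ is non-degenerate and $E$ is isotropic, the induced form $B$ on $E^{\perp}/E$ is non-degenerate, hence invertible over $\QQ$, so the $\beta$'s can be solved to make all four pairings vanish. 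As $v_3,v_4$ are orthogonal to $v_1,v_2$, this step leaves both $A$ and $B$ unchanged.

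Next I would clear $D$. Set $v_5'=v_5''+\gamma_1 v_1+\gamma_2 v_2$ and $v_6'=v_6''+\delta_1 v_1+\delta_2 v_2$. Using that $v_1,v_2$ are isotropic, mutually orthogonal, orthogonal to $v_3,v_4$, and pair with $v_5'',v_6''$ through the diagonal matrix $A$ (with $(v_1,v_5'')=a_1$, $(v_2,v_6'')=a_1a_2$, and the other two entries zero), the corner block transforms by
\begin{align*}
(v_5',v_5') &= (v_5'',v_5'') + 2\gamma_1 a_1,\\
(v_6',v_6') &= (v_6'',v_6'') + 2\delta_2 a_1 a_2,\\
(v_5',v_6') &= (v_5'',v_6'') + \gamma_2 a_1 a_2 + \delta_1 a_1.
\end{align*}
By Lemma~\ref{isobasisp} we have $a_1=1$ and $a_1a_2\in\{1,p,2p\}$, both nonzero, so $A$ is invertible; the two diagonal entries are killed by the unique $\gamma_1,\delta_2$, and the remaining off-diagonal condition is a single linear equation in the two free parameters $\gamma_2,\delta_1$, solvable e.g. by taking $\gamma_2=0$. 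Adding multiples of $v_1,v_2$ disturbs neither $C$ (as $v_1,v_2\perp v_3,v_4$) nor the $A$-pairings (as $E$ is isotropic), so afterwards $C=0$, $D=0$, and $A,B$ are unchanged, giving the asserted Gram matrix on the $\QQ$-basis $\{v_1,v_2,v_3,v_4,v_5',v_6'\}$.

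The computation is essentially block elimination, so I do not expect a deep obstacle; the two points requiring genuine care are that the required corrections live in $L_{6,2p^2}\otimes\QQ$ rather than the lattice (which is exactly why the statement is phrased over $\QQ$, and which rests on the non-degeneracy of $A$ and of the quotient form $B$), and the verification that killing the two diagonal corner entries still leaves enough freedom to kill the off-diagonal corner entry $(v_5',v_6')$, as recorded above.
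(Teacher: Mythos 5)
Your proof is correct and is essentially the paper's own argument: the paper performs the same block elimination in a single unipotent base change $M = \bsm I & 0 & R' \\ 0 & I & R \\ 0 & 0 & I \esm$ with $R = -B^{-1}C$ clearing $C$ and $R'$ solving ${}^tR'A + {}^tAR' = -(D - {}^tCB^{-1}C)$, which is exactly the composite of your two substitutions (first adding $B^{-1}$-corrections from $v_3, v_4$, then $A^{-1}$-corrections from $v_1, v_2$). The only extra content in the paper's proof is the denominator bound $R, R' \in \opn{det}B^{-1}M_2(\ZZ)$, which the lemma as stated does not require but which is reused later (in Lemmas \ref{NWU}, \ref{Zlemma}, and \ref{Ylemma}).
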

\begin{proof} 
(Essentially as in Lemma 2.24 of \cite{GHSK3}.) 
  Suppose $C$ and $D$ are as in Lemma \ref{isobasisp}.
If $R := -B^{-1} C \in \opn{det}B^{-1} M_2(\ZZ)$ and $R' \in \opn{det}B^{-1} M_2(\ZZ)$ satisfies
\begin{equation*}
  D - {}^tCB^{-1}C + {}^tR'A + {}^tAR' = 0,
\end{equation*}
then the required base change is given by the matrix
\begin{equation}\label{BaseChangeN}
  M
  =
  \begin{pmatrix} 
    I & 0 & R' \\ 
    0 & I & R \\ 
    0 & 0 & I 
  \end{pmatrix}.
\end{equation}
\end{proof}
\subsection{Counting boundary components}
We determine the $\opn{O}(L_{6, 2})$-orbits of primitive totally isotropic rank 2 sublattices of $L_{6,2}$ along the lines of \cite{scattone}.
As in \cite{scattone}, the $\opn{O}(L_{6,2})$-orbits of primitive isotropic vectors in $L_{6,2}$ can be determined in a straightforward manner by using the Eichler criterion (\S10 \cite{eichler}), and so we omit the calculation here.
\begin{lem}{(Lemma 4.1 of \cite{zbMATH03832089})}\label{Eperpdisc} 
 If $M$ is a non-degenerate even lattice, and $E \subset M$ is a primitive totally isotropic sublattice, then the discriminant form of the lattice $E^{\perp} / E$ is isomorphic to $H_E^{\perp}/H_E \subset D(M)$.
\end{lem}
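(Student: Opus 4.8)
The plan is to reduce the statement to a single explicit discriminant-form computation in an adapted integral basis, matching both $D(E^{\perp}/E)$ and $H_E^{\perp}/H_E$ to the cokernel of one block of the Gram matrix. Write $n=\opn{rk}M$ and $r=\opn{rk}E$. Since $E$ is primitive and totally isotropic we have $E\subset E^{\perp}$ and $(E^{\perp})^{\perp}=E$, so the radical of the degenerate lattice $E^{\perp}$ is exactly $E$; hence $K:=E^{\perp}/E$ is a non-degenerate even lattice and $D(K)$ is defined. By an elementary-divisor argument (the general version of the construction in the first part of Lemma \ref{isobasisp}, where the existence of such a basis is immediate) I would choose a $\ZZ$-basis $\{v_1,\dots,v_n\}$ of $M$ for which $\{v_1,\dots,v_r\}$ is a basis of $E$, $\{v_1,\dots,v_{n-r}\}$ is a basis of $E^{\perp}$, and the Gram matrix is
\[
Q=\begin{pmatrix} 0 & 0 & A \\ 0 & B & C \\ {}^tA & {}^tC & D \end{pmatrix}.
\]
Here $A$ is invertible over $\QQ$ because $M$ is non-degenerate, and $B$ is the Gram matrix of $K$ on $\{\bar v_{r+1},\dots,\bar v_{n-r}\}$; in particular $D(K)\cong\ZZ^{n-2r}/B\ZZ^{n-2r}$ with finite quadratic form $\bar w\mapsto {}^twB^{-1}w \bmod 2\ZZ$.

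The second step is to locate $H_E$ and $H_E^{\perp}$ in $D(M)$ in these coordinates. With $\{v_i^*\}$ the dual basis of $M^{\vee}$, and using that the orthogonal complement of $E^{\perp}\otimes\QQ$ in $M\otimes\QQ$ is $E\otimes\QQ$, an element $\sum_i c_iv_i^*$ lies in $E\otimes\QQ$ if and only if its first $n-r$ coordinates vanish. Hence $E^{\perp\perp}=M^{\vee}\cap(E\otimes\QQ)$ is spanned by the ``block-$3$'' duals $v_{(n-r)+s}^*=\sum_k(A^{-1})_{sk}v_k$, so $H_E\cong\ZZ^{r}/A\ZZ^{r}$. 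Pairing a class $\bar y=\sum_i c_iv_i^*$ against these generators then shows that $\bar y\in H_E^{\perp}$ precisely when its block-$1$ coordinate vector $c^{(1)}$ lies in $A\ZZ^{r}$.

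The decisive step is the comparison map
\[
\Psi:H_E^{\perp}\longrightarrow D(K),\qquad \sum_i c_i v_i^*\longmapsto \overline{\,c^{(2)}-CA^{-1}c^{(1)}\,}\in\ZZ^{n-2r}/B\ZZ^{n-2r},
\]
which I would show is a well-defined isometry descending to $H_E^{\perp}/H_E$. Well-definedness modulo $M$ (the target class changes only by $B\ZZ^{n-2r}$ under $c\mapsto c+Q\lambda$), surjectivity, and the identification of its kernel with $H_E$ are all bookkeeping once the block form is fixed; the corrective shift $-CA^{-1}c^{(1)}$ is the integral shadow of the rational base change used in Lemma \ref{Qbasis}. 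The one genuine calculation, and the main obstacle, is form-preservation: I would compute the relevant blocks of $Q^{-1}$ and check that for $\bar y,\bar y'\in H_E^{\perp}$ the pairing $(y,y')\bmod\ZZ$ collapses, after the manifestly integral terms ${}^taDa'$, ${}^ta\,c'^{(3)}$ and ${}^tc^{(3)}a'$ (with $a=A^{-1}c^{(1)}$) are discarded, to ${}^twB^{-1}w'$ where $w=c^{(2)}-CA^{-1}c^{(1)}$. Granting this, $\Psi$ is an isometry $H_E^{\perp}/H_E\cong D(K)$, which is the assertion; since both sides are intrinsic, the result is independent of the chosen basis.
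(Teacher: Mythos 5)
Your argument is correct, but there is nothing in the paper to compare it against: Lemma \ref{Eperpdisc} is imported verbatim from the literature (it is Lemma 4.1 of \cite{zbMATH03832089}) and the paper gives no proof of its own. Your proposal therefore supplies a self-contained substitute, and it does so in exactly the computational style of the surrounding text, reusing the adapted-basis block Gram matrix of Lemmas \ref{isobasisp} and \ref{Qbasis}. The individual steps check out: membership $\bar{y} \in H_E^{\perp}$ is, as you say, precisely integrality of $a := A^{-1}c^{(1)}$, because $E^{\perp\perp}$ is spanned by the vectors $v^*_{(n-r)+s} = \sum_k (A^{-1})_{sk} v_k$; your map $\Psi$ is the transparent operation of subtracting the lattice vector $m = \sum_t a_t v_{(n-r)+t} \in M$ so that $y-m$ annihilates $E$, restricting to $E^{\perp}$, and descending to $(E^{\perp}/E)^{\vee}$; and the decisive computation closes because the block inverse is
\begin{equation*}
Q^{-1} =
\begin{pmatrix}
{}^tA^{-1}({}^tC B^{-1} C - D)A^{-1} & -{}^tA^{-1}\,{}^tC B^{-1} & {}^tA^{-1} \\
-B^{-1}CA^{-1} & B^{-1} & 0 \\
A^{-1} & 0 & 0
\end{pmatrix},
\end{equation*}
which yields the exact identity $(y, y') = {}^tw B^{-1} w' - {}^ta D a' + {}^ta\, {c'}^{(3)} + {}^tc^{(3)} a'$ with $w = c^{(2)} - Ca$, the last three terms being integral on $H_E^{\perp}$ exactly as you claim; well-definedness, surjectivity (take $y = \sum_j w_j v^*_{r+j}$), and $\opn{ker}\Psi = H_E$ (if $c^{(2)} - Ca = B\mu$ with $\mu$ integral, subtract $\sum_j \mu_j v_{r+j} + \sum_t a_t v_{(n-r)+t} \in M$ to land in $E^{\perp\perp}$) are indeed routine. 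One fine point you should make explicit in a final write-up: an isomorphism of discriminant forms of even lattices must preserve the quadratic form with values in $\QQ/2\ZZ$, not only the pairing modulo $\ZZ$; taking $y' = y$ above, the discarded term ${}^ta D a$ lies in $2\ZZ$ precisely because $M$ is even (so the diagonal of $D$ is even), and $2\,{}^ta c^{(3)} \in 2\ZZ$ trivially --- without this remark you have matched only the finite bilinear forms. What your route buys over the paper's citation is self-containedness in the same elementary linear algebra over $\ZZ$ already used in this section; what it costs is only the page of bookkeeping sketched above.
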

\begin{lem}\label{HE}
If the rank 2 sublattice $E \subset L_{6,2}$ is primitive and totally isotropic, then
$E^{\perp}/E \cong \la -6 \ra \op \la -2 \ra$ or $E^{\perp}/E \cong A_2(-1)$.
\end{lem}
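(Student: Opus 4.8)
The plan is to deduce the lattice $E^{\perp}/E$ from its signature and discriminant form, computing the latter via Lemma \ref{Eperpdisc} and the finite quadratic form $D(L_{6,2})$, and then to pin down the isometry class by a short reduction-theory argument.

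First I would record the invariants of $K := E^{\perp}/E$. Since $L_{6,2}=2U\op\la -6\ra\op\la -2\ra$ has signature $(2,4)$ and $E$ is primitive totally isotropic of rank $2$, the lattice $K$ is even of rank $6-2\cdot 2=2$ and signature $(0,2)$; that is, $K$ is even and negative definite. By Lemma \ref{Eperpdisc}, the discriminant form of $K$ is isomorphic to $H_E^{\perp}/H_E\subset D(L_{6,2})$, so it suffices to determine the totally isotropic subgroups $H_E$ of $D(L_{6,2})$ and the associated forms $H_E^{\perp}/H_E$.

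Next I would carry out the isotropy computation in $D(L_{6,2})$, exactly as in Lemma \ref{isobasisp} but with $p=1$. Identifying $D(L_{6,2})$ with $((-1/6)\op(-1/2),C_6\op C_2)$, an element $(x,y)$ is isotropic if and only if $x^2+3y^2\equiv 0\bmod 12$, and a direct check of the twelve elements shows the only isotropic vectors are $0$ and the order-two vector $(3,1)$. Hence $H_E$ is either trivial or $\la(3,1)\ra\cong C_2$. Computing orthogonal complements then gives $H_E^{\perp}/H_E\cong D(L_{6,2})$, with form $(-1/6)\op(-1/2)$ on $C_6\op C_2$, in the first case, and $H_E^{\perp}/H_E\cong C_3$ with a generator of square $-2/3$ in the second.

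Finally, and this is the step carrying the real content, I would identify $K$ from its signature and discriminant form. I would not invoke single-class-in-genus as a black box, since it can fail for definite lattices; instead I would verify it directly by reducing even definite binary forms. There is a unique even positive-definite binary form of determinant $3$, namely $A_2$, so in the second case $K\cong A_2(-1)$. There are exactly two of determinant $12$, namely $\la 2\ra\op\la 6\ra$ and $A_2(2)$, and they lie in distinct genera because the $3$-torsion parts of their discriminant forms carry the squares $2/3$ and $-2/3$ respectively; passing to the negative-definite versions, only $\la -6\ra\op\la -2\ra$ has discriminant form $(-1/6)\op(-1/2)$, forcing $K\cong\la -6\ra\op\la -2\ra$ in the first case. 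The main obstacle is precisely this uniqueness: one must check that, for determinants $3$ and $12$, the finitely many reduced even binary forms fall into distinct genera whenever inequivalent, which the above enumeration makes explicit.
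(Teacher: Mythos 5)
Your proof is correct and follows essentially the same route as the paper: both compute that $E^{\perp}/E$ is even negative definite of rank $2$, determine its discriminant form via Lemma \ref{Eperpdisc} by finding that the only isotropic subgroups of $D(L_{6,2})$ are $\{0\}$ and $\la (3,1) \ra$, and then identify the lattice among the even negative definite binary lattices of determinant $12$ (resp.\ $3$) using the discriminant form. The only difference is presentational: where the paper cites the tables in \cite{SPLAG} for the classification into $\la -6 \ra \op \la -2 \ra$, $A_2(-2)$ (resp.\ $A_2(-1)$ alone), you verify the same enumeration directly by Gauss reduction of even binary forms, which is a harmless, self-contained substitution.
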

\begin{proof}
The lattice $E^{\perp}/E$ is negative definite and, by Lemma \ref{Eperpdisc}, $D(E^{\perp}/E) \cong H_E^{\perp}/H_E$.
Identify $D(L_{6,2})$ with $C_6 \op C_2$. 
If $(a,b) \in D(L_{6,2})$ is isotropic, then $a^2 / 6 + b^2 / 2 = 0 \bmod{2 \ZZ}$ and so, $(a,b)=(0,0)$ or $(a,b) = (3,1)$. 
If $H_E = \{ (0,0) \}$, then $H_E^{\perp}/H_E = D(L_{6,2})$ with discriminant form $((-1/6) \op (1/2), C_6 \op C_2)$. 
If $H_E = \la (3,1) \ra$, then $H_E^{\perp} = \la (1,1) \ra$ and $H_E^{\perp}/H_E \cong \la (2,0) \ra$ with discriminant form $( (1/3), C_3)$. 
By tables in \cite{SPLAG}, the two negative definite even lattices of determinant 12 are 
\begin{equation*}
\la -6 \ra \op \la -2 \ra
\end{equation*}
and 
\begin{equation}\label{lattice2}
\begin{pmatrix}
-4 & -2 \\ 
-2 & -4 
\end{pmatrix}.
\end{equation}
The discriminant form of \eqref{lattice2} is inequivalent to $( (1/2)^{\op 2} \op (-1/3), C_2^{\op 2} \op C_3)$. 
Therefore, if $H_E = \la (0,0) \ra$ then $E^{\perp} / E \cong \la -6 \ra \op \la -2 \ra$; 
if $H_E = \la (3,1) \ra$ then, from tables in \cite{SPLAG}, $E^{\perp}/E \cong A_2(-1)$.
\end{proof}
\begin{lem}\label{normalform}
If $E \subset L_{6,2}$ is a primitive totally isotropic sublattice of rank $2$, then there exists a $\ZZ$-basis $\{v_1, \ldots, v_6 \}$ of $L_{6,2}$ so that $\{v_1, v_2 \}$ is a basis for $E$, $\{v_1, \ldots, v_4\}$ is a basis for $E^{\perp} \subset L_{6,2}$, and the Gram matrix 
\begin{equation}\label{normalformeq}
Q = ((v_i, v_j)) = 
\begin{pmatrix} 
0 & 0 & P \\ 
0 & B & C \\ 
P & {}^tC & D 
\end{pmatrix}.
\end{equation}
Moreover,
\begin{enumerate}
\item if $H_E = \la (1,1) \ra$, then  $B = \la -6 \ra \op \la -2 \ra$, $C=D=0$, and  
\begin{equation*}
P = 
\begin{pmatrix}
0 & 1 \\ 
1 & 0 
\end{pmatrix};
\end{equation*}
\item if $H_E = \la (3,1) \ra$, then $B = A_2(-1)$,  
\begin{center}
\begin{tabular}{c c c}
$C = 
\begin{pmatrix}
0 & 0 \\ 
c & 0 
\end{pmatrix}$, 
&
$D = 
\begin{pmatrix}
2d & 0 \\ 
0 & 0 
\end{pmatrix}$,
&
$P = 
\begin{pmatrix}
0 & 1 \\ 
3 & 0 
\end{pmatrix}$,
\end{tabular}
\end{center}
for $c \in \{0,1,2\}$, $d \in \{0,1,2\}$.
\end{enumerate}
\end{lem}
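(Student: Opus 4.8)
The plan is to promote the $\QQ$-valued splitting of Lemma \ref{Qbasis} to an integral normal form, using Lemma \ref{HE} to break into the two cases according to $H_E$ and then pinning down the residual blocks by explicit integral changes of basis. First I would invoke Lemma \ref{isobasisp}, whose proof applies verbatim to $L_{6,2}$, to fix a $\ZZ$-basis $\{v_1, \ldots, v_6\}$ adapted to $E \subset E^{\perp}$ on which $Q$ has the block shape \eqref{normalformeq}, with $B$ the Gram matrix of $E^{\perp}/E$ and $P$ a matrix whose elementary divisors are those of $D(L_{6,2})/H_E^{\perp}$. By Lemma \ref{HE} there are exactly two possibilities, $H_E = \{0\}$ and $H_E = \la (3,1) \ra$, giving $B \cong \la -6 \ra \op \la -2 \ra$ and $B \cong A_2(-1)$ respectively. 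Correspondingly $D(L_{6,2})/H_E^{\perp}$ is trivial in the first case and cyclic in the second, so $P$ is unimodular for $H_E = \{0\}$ and non-unimodular otherwise; the identity $|\det Q| = (\det P)^2 |\det B| = |\det L_{6,2}| = 12$ then fixes $\det P$ in each case.

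For the split case $H_E = \{0\}$, the sublattice $N = \la v_1, v_2, v_5, v_6 \ra$ has Gram determinant $(\det P)^2 = 1$, hence is a rank-$4$ even unimodular lattice of signature $(2,2)$, so $N \cong U \op U$ and $L_{6,2} = N \op N^{\perp}$ with $N^{\perp} \cong E^{\perp}/E = B$. Choosing $v_3, v_4$ inside $N^{\perp}$ and a standard basis of $N$ then forces $C = D = 0$ and normalises $P = \bsm 0 & 1 \\ 1 & 0 \esm$, which is case (1).

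For $H_E = \la (3,1) \ra$ the lattice $N$ has $|\det| = 4$ and does not split off, so $C$ and $D$ cannot be removed entirely, and I would instead reduce them to canonical representatives. The admissible integral moves preserving \eqref{normalformeq} are: the Eichler transvections $t(v_1, a)$ and $t(v_2, a)$ with $a \in E^{\perp}$, which translate $v_5, v_6$ within $E^{\perp}$ by amounts controlled by the columns of $P$ and the values of $B$, thereby altering $C$ and $D$; the isometries of $B \cong A_2(-1)$, that is the image of $\opn{O}(A_2(-1))$; and the addition of $E$-multiples to $v_5, v_6$, which adjusts $D$ while fixing $P$ and $C$. Running these, I would first use $\opn{O}(A_2(-1))$ to put $B$ into its standard form, then use the transvections to clear all but one entry of $C$ and to reduce $D$ to a diagonal shape, and finally reduce the two surviving entries modulo the relevant elementary divisor, where the discriminant $|D(A_2(-1))| = 3$ produces the ranges $c, d \in \{0,1,2\}$ of case (2).

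The main obstacle is precisely this last, purely integral, reduction: one must show that the transvection action on the pair $(C,D)$ is exactly by the sublattice generated by the columns of $P$ together with the even self-intersection shifts coming from $B$, so that the orbit space is represented by $c, d \in \{0,1,2\}$ and no finer invariant survives. Verifying that these residues are genuinely distinct, rather than further identified by some composite move, is the delicate step, and I would confirm it by comparing the discriminant-form data of $E^{\perp}/E$ through Lemma \ref{Eperpdisc} rather than by a brute-force orbit enumeration.
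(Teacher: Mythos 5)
Your skeleton is essentially the paper's: an adapted $\ZZ$-basis from (the proof of) Lemma \ref{isobasisp}, the case split through Lemma \ref{HE}, and then integral unipotent moves to normalise $C$ and $D$ (your Eichler transvections $t(v_1,a)$, $t(v_2,a)$ with $a\in E^{\perp}$, together with $E$-shifts of $v_5,v_6$, generate exactly the paper's block matrices $g_2,g_3,g_5,g_6$). Your case (1) shortcut --- observing that $\la v_1,v_2,v_5,v_6\ra$ is even unimodular of signature $(2,2)$, hence a $U\op U$ summand splitting off $C$ and $D$ --- is correct and cleaner than the paper's explicit clearing of $C_1$ and $D_2$ (note the statement's label $H_E=\la(1,1)\ra$ there is evidently a typo for $H_E=\{0\}$, which is what both you and the paper's proof use). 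The genuine gap is in case (2), and it is one your own bookkeeping creates: the identity $\vert\det Q\vert=(\det P)^2\vert\det B\vert$ that you invoke gives, with $B\cong A_2(-1)$, the equation $(\det P)^2\cdot 3=12$, i.e.\ $\det P=\pm 2$, whereas the normal form you then set out to reach has $P=\bsm 0 & 1 \\ 3 & 0 \esm$ with $\det P=\pm 3$. You cannot have both, and your proposal sails past this contradiction without resolving it, so as written it cannot complete case (2). (The tension is real and sits inside the paper too: its proof asserts via Smith normal form that $A_0$ is equivalent to $\bsm 0&1\\3&0\esm$, yet by the recipe of Lemma \ref{isobasisp} the elementary divisors of $A_0$ are those of $D(L_{6,2})/H_E^{\perp}$, and by Lemma \ref{HE} $H_E^{\perp}=\la(1,1)\ra$ has order $6$, so that quotient has order $2$, pointing to divisors $(1,2)$. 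Your determinant check, carried through honestly, detects this discrepancy rather than proving the statement; you must either correct the target $P$ or explain why the identity fails, and you do neither.) Relatedly, your mechanism for the ranges $c,d\in\{0,1,2\}$ is wrong even granting the paper's $P$: in the paper's proof these come from the entries of $A$ --- the term $3s_{22}$ in \eqref{normalformeq1} reduces $c$ modulo $3$, and the set $\{{}^tWA+{}^tAW\}$ consists of matrices $\bsm 6a & b \\ b & 2c \esm$, reducing the diagonal of $D$ modulo $6$ and $2$ --- not from $\vert D(A_2(-1))\vert=3$ as you claim; with a different $A$ the ranges change.

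Your closing paragraph also misidentifies where the difficulty lies. The lemma asserts only the \emph{existence} of a basis realising one of the listed forms; its sole use, in Theorem \ref{boundaryboundsthm}, is an upper bound on the number of orbits (``at most 20''), so distinctness of the representatives $(c,d)$ is neither claimed nor needed, and worrying about ``composite moves'' identifying them is superfluous. Moreover the tool you propose for that verification could not deliver it: by Lemma \ref{Eperpdisc} every $E$ in case (2) has the same $E^{\perp}/E\cong A_2(-1)$, so discriminant-form data is constant across all $(c,d)$ and cannot separate them. The effort spent there should instead have gone into the actual crux: the explicit integral computation showing which matrices ${}^tSA+BT$ and ${}^tWA+{}^tAW$ are attainable, which is precisely what the paper's proof supplies and your sketch only asserts.
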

\begin{proof}
As in Lemma \ref{isobasisp}, take a basis $\{v_1, \ldots, v_6 \}$ for $L_{6,2}$ so that $\{v_1, v_2\}$ is a basis for $E$, and $\{v_1, \ldots, v_4 \}$ is a basis for $E^{\perp}$. 
Suppose that
\begin{equation*}
Q = ((v_i, v_j)) = 
\begin{pmatrix} 
0 & 0 & A_0 \\ 
0 & B_0 & C_0 \\ 
A_0 & {}^tC_0 & D_0 
\end{pmatrix}.
\end{equation*}
By Lemma \ref{HE}, $H_E = \la (0,0) \ra$ or $H_E = \la (3,1) \ra$. 

If $H_E = \la (0,0) \ra$ then, because of the existence of a Smith normal form, there exist integral matrices $U$ and $Z$ so that 
\begin{equation*}
U A_0 Z = 
\begin{pmatrix} 
0 & 1 \\ 
1 & 0 
\end{pmatrix}.
\end{equation*}
By Lemma \ref{HE}, there exists $X \in \opn{GL}(2, \ZZ)$ so that ${}^tX B_0 X = B = \la -6 \ra \op \la -2 \ra$.
Therefore, the matrix $g_1:=\opn{diag}({}^tU, X, Z)$ transforms $Q$ to $Q'$ where
\begin{equation*}
Q' = {}^tg_1Qg_1 
=
\begin{pmatrix} 
0 & 0 & A \\ 
0 & B & C_1 \\ 
{}^t A & {}^t C_1 & D_1 
\end{pmatrix},
\end{equation*}
and
\begin{equation*}
A =
\begin{pmatrix}
0 & 1 \\
1 & 0 
\end{pmatrix}.
\end{equation*} 
The integral matrix $g_2$ defined by  
\begin{equation*}
g_2=
\begin{pmatrix} 
I & -{}^tA {}^t C_1 & 0 \\ 
0 & I & 0 \\ 
0 & 0 & I 
\end{pmatrix}
\end{equation*}
transforms $Q'$ to $Q''$, where
\begin{equation*}
Q'' = 
\begin{pmatrix} 
0 & 0 & A \\ 
0 & B & 0 \\ 
{}^t A & 0 & D_2 
\end{pmatrix}.
\end{equation*}

If $g_3$ is an integral matrix of the form
\begin{equation*}
g_3= 
\begin{pmatrix} 
I & 0 & W \\
0 & I & 0 \\ 
0 & 0 & I 
\end{pmatrix},
\end{equation*}
then $g_3$ sends $D_2 \mapsto D_2 + {}^tW A + {}^tAW$. 
One checks that the set $\{ {}^tWA + {}^tAW \mid W \in M_2(\ZZ) \}$ contains all matrices of the form 
\begin{equation*}
\begin{pmatrix} 
2a & b \\ 
b & 2c 
\end{pmatrix}
\end{equation*}
for $a,b,c \in \ZZ$. 
Therefore, there exists integral $W$ and $d_{11}'$, $d_{22}' \in \{0,1\}$ so that $d_{11}' \equiv d_{11} \bmod{2}$, $d_{22}' \equiv d_{22} \bmod{2}$, and 
\begin{equation*}
g_3: D_2 
\mapsto
\begin{pmatrix}
d_{11}' & 0 \\
0 & d_{22}' 
\end{pmatrix}.
\end{equation*}
As the form $Q$ is even, both $d_{11}$ and $d_{22}$ are even, and so there exists $W$ so that $g_3$ sends $D_2$ to $0$.
Therefore, the matrix $g_3 g_2 g_1$ gives the base change required in the statement of the theorem.

If $H_E = \la (3,1) \ra$ then, because of the Smith normal form, there exist $U, Z \in \opn{GL}(2, \ZZ)$ such that 
\begin{equation*}
U A_0 Z = 
\begin{pmatrix} 
0 & 1 \\ 
3 & 0 
\end{pmatrix}.
\end{equation*}
Moreover, there exists $X \in \opn{GL}(2, \ZZ)$ such that ${}^tX B_0 X = B = A_2(-1)$.
Therefore the matrix $g_4$ given by $g_4=\opn{diag}({}^t U, X, Z) \in \opn{GL}(2, \ZZ)$ transforms $Q$ to $Q'$ where
\begin{equation*}
Q' = 
{}^tg_1Qg_1 = 
\begin{pmatrix} 
0 & 0 & A \\ 
0 & B & C_1 \\ 
{}^t A & {}^t C_1 & D_1 
\end{pmatrix}
\end{equation*} 
and 
\begin{equation*}
A=
\begin{pmatrix} 
0 & 1 \\ 
3 & 0 
\end{pmatrix}.
\end{equation*}
Let $g_5$ be an integral matrix of the form 
\begin{equation*}
g_5=
\begin{pmatrix} 
I & S & 0 \\ 
0 & I & T \\ 
0 & 0 & I 
\end{pmatrix},
\end{equation*}
for some $S, T \in M_2(\ZZ)$. 
We claim that $(s_{ij}):=S$ and $(t_{ij}):=T$ can be chosen so that
\begin{equation*}
{}^t SA + BT + C_1 = 
\begin{pmatrix} 
0 & 0 \\ 
a & 0 
\end{pmatrix}
\end{equation*}
where $a$ is determined modulo 3. 
As 
\begin{equation}\label{normalformeq1}
{}^tSA + TQ + C_1= 
\begin{pmatrix} 3s_{21} - 2s_{11} - t_{21}  + c_{11} & s_{11} - 2t_{12} - t_{22} + c_{12} \\ 
3s_{22} - 2t_{21} - t_{11} + c_{21} & s_{12} - t_{12} - 2t_{22} + c_{22}
\end{pmatrix}, 
\end{equation}
then the claim about the second column is immediate, as $s_{11}$ and $t_{12}$ are both free. 

Let $\delta:=2t_{11} + t_{21}$ and $s_{21}=0$.
By taking $t_{11}$ and $t_{21}$ so that $\delta = -c_{11}$, the first column of \eqref{normalformeq1} can be mapped to ${}^t(0, 3s_{22} - c_{11} + c_{21})$.
Therefore, with an appropriate choice of $s_{22}$, the matrix $g_5$ transforms $Q'$ to
\begin{equation*}
Q'' = 
\begin{pmatrix} 
0 & 0 & A \\ 
0 & B & C_0 \\ 
{}^tA & {}^tC_0 & D_2 
\end{pmatrix},
\end{equation*}
where $C_0$ is as in the statement of the theorem. 

We next put $D_2$ in the correct form. 
If $g_6$ is an integral matrix of the form 
\begin{equation*}
g_6 = 
\begin{pmatrix} 
I & 0 & W \\ 
0 & I & 0 \\ 
0 & 0 & I 
\end{pmatrix}, 
\end{equation*}
 then $g_6$ sends 
\begin{equation*}
D_2 \mapsto D_2 + {}^tW A + {}^tAW.
\end{equation*}
One checks that the set $\{ {}^tWA + {}^tAW \mid W \in M_2(\ZZ)\}$ contains all matrices of the form 
\begin{equation*}
\begin{pmatrix} 
6a & b \\ 
b & 2c 
\end{pmatrix}
\end{equation*}
for $a,b,c \in \ZZ$. 
Therefore if $(d_{ij}):=D$, there exists $W$ so that  
\begin{equation*}
g_3: D_2 
\mapsto 
\begin{pmatrix} 
d_{11}' & 0 \\ 
0 & d_{22}' 
\end{pmatrix},
\end{equation*}
for $d_{11}' \in \{0, \ldots, 5\}$, 
$d_{22}' \in \{0,1\}$, 
and where $d_{11}' \equiv d_{11} \bmod{6}$ and $d_{22}' \equiv d_{22} \bmod{2}$.
As the form $Q$ is even, then both $d_{11}$ and $d_{22}$ are even.
Therefore, there exists $W$ so that $d_{11}'$ is even and $d_{22}'=0$.
Therefore, the matrix $g_6 g_5 g_4$ gives the base change required in the statement of the theorem.
\end{proof}
\begin{thm}\label{boundaryboundsthm}
For prime $p>3$, the modular variety $\fc_{2p^2}$ has at most $160(p^5+p^2)$ boundary curves.
\end{thm}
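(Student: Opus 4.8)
The plan is to identify the boundary curves of $\fc_{2p^2}$ with $\opn{O}^+(L, h_{2p^2})$-orbits of primitive totally isotropic rank $2$ sublattices of $h^{\perp} \cong L_{6,2p^2}$, and then to bound the number of such orbits by transferring the problem to the overlattice $L_{6,2}$, where both a finite orbit count (Lemma \ref{normalform}) and a finite index bound (Theorem \ref{finindexthm}) are available. By the decomposition \eqref{dm*dec}, the boundary curves of $\fc_{2p^2}$ are in bijection with the $\opn{O}^+(L, h_{2p^2})$-equivalence classes of totally isotropic planes in $h^{\perp} \otimes \QQ$, equivalently of primitive totally isotropic rank $2$ sublattices $E \subset L_{6,2p^2}$. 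Since $\opn{O}(L, h_{2p^2}) \le \opn{O}(L_{6,2})$ by Proposition \ref{modgroup}, the group $\opn{O}^+(L, h_{2p^2})$ acts on $L_{6,2}$ as well, and the whole argument is the two-step estimate
\[
\#\{\text{boundary curves}\} \le \bigl[\opn{O}(L_{6,2}) : \opn{O}^+(L, h_{2p^2})\bigr] \cdot \#\{\opn{O}(L_{6,2})\text{-orbits of primitive totally isotropic rank } 2 \text{ sublattices of } L_{6,2}\}.
\]

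The first step, and the main obstacle, is to pass from isotropic sublattices of $L_{6,2p^2}$ to those of $L_{6,2}$. Under the embedding $L_{6,2p^2} \subset L_{6,2}$ of Proposition \ref{modgroup} (which has index $p$), a sublattice primitive in $L_{6,2p^2}$ need not remain primitive in $L_{6,2}$, so I would send each primitive totally isotropic $E \subset L_{6,2p^2}$ to its saturation $\overline{E} = (E \otimes \QQ) \cap L_{6,2}$, which is again primitive, totally isotropic, and of rank $2$. I would then verify that $E \mapsto \overline{E}$ is injective, since $E = \overline{E} \cap L_{6,2p^2}$ recovers $E$, and that it is equivariant for $\opn{O}^+(L, h_{2p^2})$, because every such element preserves both $L_{6,2p^2}$ and $L_{6,2}$ and saturation commutes with isometries. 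Consequently the induced map on orbit sets is injective, so the number of boundary curves is at most the number of $\opn{O}^+(L, h_{2p^2})$-orbits of primitive totally isotropic rank $2$ sublattices of $L_{6,2}$.

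The second step is an orbit–stabiliser estimate. Since $\opn{O}^+(L, h_{2p^2})$ has finite index in $\opn{O}(L_{6,2})$, each $\opn{O}(L_{6,2})$-orbit of such sublattices breaks into at most $[\opn{O}(L_{6,2}) : \opn{O}^+(L, h_{2p^2})]$ orbits under $\opn{O}^+(L, h_{2p^2})$. By Lemma \ref{normalform}, every primitive totally isotropic rank $2$ sublattice of $L_{6,2}$ is $\opn{O}(L_{6,2})$-equivalent to one of the explicit normal forms: a single form when $H_E = \la (0,0) \ra$, and at most $3 \times 3 = 9$ forms (indexed by $c,d \in \{0,1,2\}$) when $H_E = \la (3,1) \ra$, giving at most $10$ orbits in all. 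For the index, the computation in Theorem \ref{finindexthm} gives $[\opn{O}^+(L_{6,2}) : \opn{O}^+(L, h_{2p^2})] \le 8(p^5 + p^2)$, and combining this with $[\opn{O}(L_{6,2}) : \opn{O}^+(L_{6,2})] = 2$ yields $[\opn{O}(L_{6,2}) : \opn{O}^+(L, h_{2p^2})] \le 16(p^5 + p^2)$.

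Substituting these two bounds into the displayed estimate gives at most $10 \cdot 16(p^5 + p^2) = 160(p^5 + p^2)$ boundary curves, as claimed. The genuinely delicate point is the saturation argument of the first step, namely checking that passing to primitive closures in the overlattice $L_{6,2}$ neither merges distinct orbits nor destroys equivariance; the remaining work is careful bookkeeping of the factors of $2$ that distinguish $\opn{O}$ from $\opn{O}^+$, so that the constants multiply to exactly $160$.
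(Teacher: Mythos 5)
Your proof is correct and follows essentially the same route as the paper: Lemma \ref{normalform} yields at most $10$ normal forms (hence at most $20$ orbits up to $\opn{O}^+(L_{6,2})$-equivalence), and the computation in Theorem \ref{finindexthm} supplies the factor $8(p^5+p^2)$, the only difference being that you distribute the factor of $2$ into the index $\bigl[\opn{O}(L_{6,2}) : \opn{O}^+(L, h_{2p^2})\bigr] \leq 16(p^5+p^2)$ rather than into the orbit count, with the same product $160(p^5+p^2)$. Your explicit saturation argument $E \mapsto (E \otimes \QQ) \cap L_{6,2}$, transferring primitive totally isotropic sublattices of $L_{6,2p^2}$ to $L_{6,2}$ injectively and equivariantly, is sound and spells out a step the paper leaves implicit.
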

\begin{proof}
If $E_1, E_2 \subset L_{6,2}$ are totally isotropic primitive sublattices of rank 2 with the same normal form \eqref{normalformeq} then, by Lemma \ref{normalform}, there exists $g \in \opn{O}(L_{6,2})$ so that $g(E_1) = E_2$.
Therefore, by counting normal forms, there are at most 20 totally isotropic primitive rank 2 sublattices of $L_{6,2}$ up to $\opn{O}^+(L_{6,2})$-equivalence. 
By Theorem \ref{finindexthm},  
\begin{equation*}
 \vert \opn{O}^+(L_{6,2}) : \opn{O}^+(L, h_{2p^2}) \vert \leq 8 (p^5 + p^2),
\end{equation*} 
and so, up to $\opn{O}^+(L, h_{2p^2})$-equivalence, there are at most $160(p^5 + p^2)$ boundary curves.
\end{proof}
\subsection{Counting singularities}
In this section we count singularities in the boundary of a toroidal compactification of $\fc_{2p^2}$.
We shall only consider the compactification in a neighbourhood of a boundary curve. 
The structure of the boundary in a neighbourhood of a boundary point is different, and presents additional toric considerations.

Throughout, we assume the boundary component $F$ corresponds to a rank 2 primitive totally isotropic sublattice $E \subset L_{6, 2p^2}$ and define $N=a_1 a_2 \opn{det} B$, where $a_1$, $a_2$, $B$ are as in Lemma \ref{isobasisp}.
\begin{lem}\label{NWU}
On the basis given in Lemma \ref{Qbasis}, the groups $N(F)$, $W(F)$ and $U(F)$ are given by
\begin{align*}
& N(F) = \left \{ 
\begin{pmatrix} 
U & V & W \\ 
0 & X & Y \\ 
0 & 0 & Z 
\end{pmatrix} 
\mid
\begin{matrix} 
&\text{ ${}^t UAZ = A$, ${}^t XBX = B$,  ${}^t VAZ = 0$, ${}^t XBY + {}^t VAZ = 0$, } \\ 
&\text{ ${}^t YBY + {}^t ZAW + {}^t WAZ = 0$, $\opn{det}(U) > 0$},  
\end{matrix} 
\right\} \\
& W(F) = 
\left \{ 
\begin{pmatrix} 
I & V & W \\ 
0 & I & Y \\ 
0 & 0 & I 
\end{pmatrix} 
\mid 
\text{$BY + {}^t VA = 0$, ${}^t Y BY + AW + {}^t WA = 0$} \right \}, \\
& U(F) = \left \{ 
\begin{pmatrix} I & 0 & 
\begin{pmatrix} 0 & a_1a_2x \\ 
- x & 0 
\end{pmatrix} \\ 
0 & I & 0 \\ 
0 & 0 & I 
\end{pmatrix} 
\mid
 x \in \RR \right \}.
\end{align*}
Furthermore, if $g \in N(F)$, then $g \in N(F) \cap \opn{O}(L_{6,2p^2})$ if and only if 
\begin{equation*}
M^{-1} g M 
=
\begin{pmatrix}
U & V & -VB^{-1}C + W + UR' -R'Z \\
0 & X & Y - XB^{-1}C + B^{-1}CZ \\
0 & 0 & Z 
\end{pmatrix} \in \opn{GL}(6, \ZZ)
\end{equation*}
where $M$ is as in \eqref{BaseChangeN}.
\end{lem}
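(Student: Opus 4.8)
The plan is to compute $N(F)$, $W(F)$, and $U(F)$ directly from their definitions as stabiliser, unipotent radical, and centre of the parabolic, using the block decomposition of $Q$ afforded by Lemma~\ref{Qbasis}. First I would write a general element $g \in \opn{O}(L_{6,2p^2} \otimes \RR)$ as a $3\times 3$ block matrix $\bigl(\begin{smallmatrix} U & V & W \\ P & X & Y \\ R & S & Z \end{smallmatrix}\bigr)$ with blocks sized $2,2,2$ according to the flag $E \subset E^\perp \subset L_{6,2p^2}$. The stabiliser $N(F)$ of $F$ must preserve the totally isotropic plane $\la v_1, v_2 \ra = E$ and hence also $E^\perp = \la v_1, \dots, v_4 \ra$; this block-upper-triangularity forces the lower blocks $P$, $R$, $S$ to vanish. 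The orientation condition $\opn{det}(U)>0$ encodes the reduction to the appropriate spinor/component of the parabolic. The isometry condition $g^t Q g = Q$, expanded blockwise against $Q = \bigl(\begin{smallmatrix} 0 & 0 & A \\ 0 & B & 0 \\ A & 0 & 0 \end{smallmatrix}\bigr)$, then yields exactly the listed relations: the $(1,3)$-block gives ${}^tUAZ=A$, the $(2,2)$-block gives ${}^tXBX=B$, the $(2,3)$-block gives ${}^tXBY + {}^tVAZ = 0$ (with ${}^tVAZ=0$ coming from the $(1,2)$-block against the vanishing of $B$-cross terms), and the $(3,3)$-block gives ${}^tYBY + {}^tZAW + {}^tWAZ = 0$.

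Next I would identify $W(F)$ as the unipotent radical, i.e.\ the subgroup of $N(F)$ acting trivially on the associated graded $E \oplus (E^\perp/E) \oplus (L/E^\perp)$; this sets $U=X=Z=I$, and substituting into the relations above collapses ${}^tXBY+{}^tVAZ=0$ to $BY + {}^tVA=0$ and the quadratic relation to ${}^tYBY + AW + {}^tWA = 0$, as claimed. For $U(F)$, the centre of $W(F)$, I would compute the commutator of two generic elements of $W(F)$ and impose that it vanish; the surviving elements are those with $V=0$ and $Y=0$, leaving only the top-right block $W$ constrained by $AW + {}^tWA = 0$. Solving the skew-type equation $AW + {}^tWA=0$ with $A = \opn{diag}(a_1, a_1a_2)$ shows $W$ ranges over the one-parameter family $\bigl(\begin{smallmatrix} 0 & a_1a_2\,x \\ -x & 0 \end{smallmatrix}\bigr)$, $x \in \RR$, matching the stated form.

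The final assertion is the integrality criterion: $g \in N(F)$ lies in $\opn{O}(L_{6,2p^2})$ precisely when it is an integral matrix on the original $\ZZ$-basis $\{v_1, \dots, v_6\}$, not merely on the rational basis of Lemma~\ref{Qbasis}. Since the base change $M$ of \eqref{BaseChangeN} passes between these two bases, the condition is that $M^{-1} g M \in \opn{GL}(6, \ZZ)$; expanding this conjugation using the explicit $R = -B^{-1}C$ and $R'$ from Lemma~\ref{Qbasis} produces the displayed block matrix, and one reads off the corrected $(1,3)$- and $(2,3)$-blocks $-VB^{-1}C + W + UR' - R'Z$ and $Y - XB^{-1}C + B^{-1}CZ$. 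I expect the main obstacle to be bookkeeping in this last conjugation: one must track how the off-diagonal blocks $R, R'$ of $M$ interact with the diagonal blocks $U, X, Z$ of $g$, and verify that the diagonal blocks $U, X, Z$ are themselves unchanged by the conjugation (so that integrality of $U, X, Z$ and of the two composite off-diagonal blocks is the full criterion). The block relations from the isometry condition should be used to confirm that no further constraints are hidden, but the computation is purely formal once the block sizes and the shape of $M$ are fixed.
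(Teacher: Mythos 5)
Your overall route is exactly the one the paper intends: the paper's own proof of this lemma is a two-line citation (``direct calculation as in Section 2.12 of \cite{Kondo}'' for $N(F)$, $W(F)$, $U(F)$, and ``as in Proposition 2.27 of \cite{GHSK3}'' for the integrality criterion), and your proposal simply carries out that direct calculation. Your blockwise expansion of ${}^tgQg=Q$ against $Q=\bsm 0 & 0 & A \\ 0 & B & 0 \\ A & 0 & 0 \esm$, the passage to $W(F)$ by setting $U=X=Z=I$, the commutator computation identifying the centre, and the conjugation $M^{-1}gM$ --- whose $(1,3)$- and $(2,3)$-blocks you correctly obtain as $-VB^{-1}C+W+UR'-R'Z$ and $Y-XB^{-1}C+B^{-1}CZ$ after substituting $R=-B^{-1}C$, with the diagonal blocks unchanged --- are all in order and match the intended argument.

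One step, however, is genuinely wrong: your claim that ${}^tVAZ=0$ ``comes from the $(1,2)$-block''. The first row of ${}^tg$ is $({}^tU,\,0,\,0)$ and the second column of $Qg$ is ${}^t(0,\;BX,\;AV)$, so the $(1,2)$-entry of ${}^tgQg$ is identically zero and imposes no condition at all. The complete list of conditions from ${}^tgQg=Q$ is ${}^tUAZ=A$, ${}^tXBX=B$, ${}^tXBY+{}^tVAZ=0$, and ${}^tYBY+{}^tZAW+{}^tWAZ=0$ (the $(3,1)$- and $(3,2)$-blocks are transposes of earlier ones); the extra condition ${}^tVAZ=0$ cannot be derived, and in fact cannot hold throughout $N(F)$: together with ${}^tXBY+{}^tVAZ=0$ it would force $Y=0$ (since $X$ and $B$ are invertible), which would exclude from $N(F)$ exactly those elements of $W(F)$ with $Y\neq 0$ and contradict $W(F)\subset N(F)$. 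So that condition in the lemma's displayed description of $N(F)$ is a misprint, and your calculation should have exposed it rather than supplied a post hoc justification for it. A smaller point of the same kind: solving $AW+{}^tWA=0$ with $A=\opn{diag}(a_1,a_1a_2)$ gives $w_{11}=w_{22}=0$ and $w_{12}=-a_2w_{21}$, i.e. $W=\bsm 0 & a_2x \\ -x & 0 \esm$, which agrees with the stated family $\bsm 0 & a_1a_2x \\ -x & 0 \esm$ only because $a_1=1$ in every case allowed by Lemma \ref{isobasisp}; your write-up should note this rather than assert that the skew equation directly yields the stated form.
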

\begin{proof}
The first part follows from direct calculation (as in Section 2.12 of \cite{Kondo}).
The second part follows as in Proposition 2.27 of \cite{GHSK3}.
\end{proof}

As in \cite{Kondo}, $\dc_{L_{6,2p^2}}(F)$ can be identified with a Siegel domain of the third kind inside $\CC \times \CC^2 \times \HH^+$.
The identification proceeds by taking homogeneous coordinates $[t_1:\ldots:t_6]$ for $\PP(L_{6,2p^2} \otimes \CC)$ and mapping  $\dc_{L_{6,2p^2}}(F) \rightarrow \PP(L_{6,2p^2} \otimes \CC)$ by setting $t_6=1$ and
\begin{equation}\label{homogdef}
\begin{cases}
& t_1 \mapsto z \in \CC \\ 
& t_3 \mapsto w_1 \in \CC \\
& t_4 \mapsto w_2 \in \CC \\
& t_5 \mapsto \tau \in \HH^+ \\ 
& t_2 \mapsto \frac{  -2 a_1 z \tau - {}^t (w_1, w_2)B(w_1, w_2)}{2 a_1 a_2}.
\end{cases}
\end{equation}
\begin{prop}\label{Naction}
If 
\begin{equation*}
g = 
\begin{pmatrix} 
U & V & W \\ 
0 & X & Y \\ 
0 & 0 & Z 
\end{pmatrix} 
\in N(F)
\end{equation*}
is as in Lemma \ref{NWU}, 
where $Z = \begin{pmatrix} a & b \\ c & d \end{pmatrix}$, then the action of $g$ on $\dc_L(F)$ is given by
\begin{equation*}
\begin{cases}
& z \mapsto \frac{z}{\opn{det} Z} + (c \tau + d)^{-1} \left ( \frac{c}{2 a_1 \opn{det} Z } {}^t \underline{w} B \underline{w} + \underline{V_1} \underline{w} + W_{11} \tau + W_{12} \right ) \\
& \underline{w} \mapsto (c \tau + d)^{-1} \left ( X \underline{w} + Y \bsm \tau \\ 1 \esm \right ) \\
& \tau \mapsto \frac{a \tau + b} {c \tau +d }. 
\end{cases}
\end{equation*}
\end{prop}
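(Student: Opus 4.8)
The plan is to realise the action of $g$ as the projective-linear action on $\PP(L_{6,2p^2}\otimes\CC)$ induced by the matrix $g$ with respect to the basis of Lemma \ref{Qbasis}, and then to read off the induced transformation in the affine coordinates $(z,\underline{w},\tau)$ of \eqref{homogdef}. Concretely, I would represent a point of $\dc_L(F)$ by its homogeneous coordinate vector $t={}^t(t_1,\ldots,t_6)$, split into the three $2$-blocks $\alpha={}^t(t_1,t_2)$, $\underline{w}={}^t(t_3,t_4)$, and $\gamma={}^t(t_5,t_6)={}^t(\tau,1)$ associated with the sub-bases $\{v_1,v_2\}$, $\{v_3,v_4\}$, $\{v_5,v_6\}$. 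Applying $g$ blockwise gives the new blocks $U\alpha+V\underline{w}+W\gamma$, $X\underline{w}+Y\gamma$, and $Z\gamma$; since the point lies in projective space, the new affine coordinates are recovered by dividing through by the new sixth coordinate, which is the lower entry of $Z\gamma$, namely $c\tau+d$.

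The $\tau$- and $\underline{w}$-transformations then fall out immediately. The bottom block is $Z\,{}^t(\tau,1)={}^t(a\tau+b,\,c\tau+d)$, so after dehomogenisation $\tau\mapsto (a\tau+b)/(c\tau+d)$. The middle block is $X\underline{w}+Y\,{}^t(\tau,1)$, and dividing by $c\tau+d$ yields the stated formula for $\underline{w}$. Neither step uses the defining relations of $N(F)$ beyond the block-triangular shape, so these are essentially bookkeeping.

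The substantive computation is the $z$-coordinate, which is the first entry of $U\alpha+V\underline{w}+W\gamma$ divided by $c\tau+d$. Here two inputs must be combined. First, the relation ${}^tUAZ=A$ from Lemma \ref{NWU} determines the relevant entries of $U$: solving $U=A^{-1}\,{}^tZ^{-1}A$ with $A=\opn{diag}(a_1,a_1a_2)$ gives $U_{11}=d/\opn{det}Z$ and $U_{12}=-ca_2/\opn{det}Z$. Second, the top entry $t_2$ of $\alpha$ is not a free coordinate but the quadratic expression $t_2=(-2a_1z\tau-{}^t\underline{w}B\underline{w})/(2a_1a_2)$ forced by \eqref{homogdef}. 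Substituting both, the first entry of the top block becomes
\begin{equation*}
\frac{z(c\tau+d)}{\opn{det}Z}+\frac{c\,{}^t\underline{w}B\underline{w}}{2a_1\opn{det}Z}+\underline{V_1}\,\underline{w}+W_{11}\tau+W_{12},
\end{equation*}
where $\underline{V_1}$ is the first row of $V$; the factor $a_2$ in $U_{12}$ cancels against the $a_1a_2$ in the denominator of $t_2$, which is exactly what produces the clean coefficient $c/(2a_1\opn{det}Z)$. Dividing by $c\tau+d$ gives the claimed formula for $z$.

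I expect the main obstacle to be precisely this cancellation in the $z$-coordinate: one must keep careful track of the diagonal entries $a_1,a_1a_2$ of $A$ through both the inversion ${}^tUAZ=A$ and the quadratic constraint defining $t_2$, since an error in either would spoil the cancellation of $a_2$ and corrupt the quadratic term. The remaining defining relations of $N(F)$ (those involving $X$, $Y$, $W$) are not needed for the formula itself; they only guarantee that $g$ preserves $Q$ and hence maps $\dc_L(F)$ to itself, so I would invoke them only to justify that the transformation is well defined on the domain.
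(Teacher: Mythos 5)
Your computation is correct and is exactly the argument the paper invokes: its proof of Proposition \ref{Naction} is simply ``As in \cite{GHSK3}'', where the same dehomogenisation of the projective-linear action is carried out, including the use of ${}^tUAZ=A$ to get $U_{11}=d/\opn{det}Z$, $U_{12}=-ca_2/\opn{det}Z$ and the substitution of the quadric constraint for $t_2$. Your intermediate expression and the cancellation of $a_2$ check out, so there is nothing to add.
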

\begin{proof}
As in \cite{GHSK3}.
\end{proof}
\begin{lem}\label{Zlemma}
Let 
\begin{equation}\label{Zdef}
Z= 
\begin{pmatrix} 
a & b \\ 
c & d  
\end{pmatrix}
\in \Gamma_N,
\end{equation}
where $\Gamma_N \subset \opn{SL}(2, \ZZ)$ is the principal congruence subgroup of level $N$. 
Then, on the basis given in Lemma \ref{Qbasis}, the map
\begin{equation*}
Z \mapsto g_Z 
= 
\begin{pmatrix} 
Z' & 0 & 0 \\ 
0 & I & 0 \\ 
0 & 0 & Z 
\end{pmatrix},
\end{equation*}
where
\begin{equation}\label{Z'def}
Z' = 
\begin{pmatrix} 
d & -ca_2 \\ 
- b / a_2 & a 
\end{pmatrix},
\end{equation}
defines an embedding of $\Gamma_N$ in $N(F) \cap \opn{O}(L_{6,2p^2})$.
\end{lem}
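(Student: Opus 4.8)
The plan is to verify in turn that $g_Z$ lies in $N(F)$, that $g_Z$ preserves the lattice (so lies in $\opn{O}(L_{6,2p^2})$), and that $Z \mapsto g_Z$ is an injective homomorphism.

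First I would check membership in $N(F)$ against the description in Lemma \ref{NWU}. For $g_Z$ we have $U = Z'$, $X = I$, $V = W = Y = 0$ and bottom-right block $Z$, so every condition involving $V$, $W$ or $Y$ holds trivially and the only substantive requirements are ${}^t Z' A Z = A$ and $\det Z' > 0$. Both are immediate $2 \times 2$ computations: expanding ${}^t Z'(AZ)$ with $A = \opn{diag}(a_1, a_1 a_2)$ and using $\det Z = ad - bc = 1$, the off-diagonal entries cancel and the diagonal entries return $a_1$ and $a_1 a_2$, while $\det Z' = ad - bc = 1 > 0$.

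Next I would establish integrality through the criterion in the second part of Lemma \ref{NWU}, i.e.\ that $M^{-1} g_Z M \in \opn{GL}(6,\ZZ)$. Substituting $V = W = Y = 0$, $U = Z'$, $X = I$ into that formula produces a block upper-triangular matrix with diagonal blocks $Z'$, $I$, $Z$ and off-diagonal blocks $R(I - Z)$ and $Z' R' - R' Z$, where $R = -B^{-1} C$ and $R'$ are the matrices of Lemma \ref{Qbasis}, both lying in $(\det B)^{-1} M_2(\ZZ)$. The block $Z'$ is integral because $a_2 \mid N \mid b$. For the off-diagonal blocks I would use $Z \equiv I \pmod{N}$ with $N = a_1 a_2 \det B$: the entries of $I - Z$ are divisible by $N$, so $R(I - Z)$ stays integral after clearing the denominator $\det B$. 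The main obstacle is the block $Z' R' - R' Z$; here I would rewrite it as $(Z' - I) R' - R'(Z - I)$ and note that every entry of $Z' - I$ is divisible by $\det B$ — in particular $b/a_2 = (\det B)\, a_1 (b/N)$ with $b/N \in \ZZ$ — while the entries of $Z - I$ are divisible by $N$; hence both summands are divisible by $\det B$, cancelling the denominator and giving an integral matrix. As this integral matrix is block upper-triangular with unimodular diagonal blocks $Z'$, $I$, $Z \in \opn{SL}(2,\ZZ)$, its inverse is integral as well, so it lies in $\opn{GL}(6,\ZZ)$ and $g_Z \in N(F) \cap \opn{O}(L_{6,2p^2})$.

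Finally I would check the embedding property. Since $\det Z = 1$ gives ${}^t\opn{adj}(Z) = ({}^t Z)^{-1}$, a short computation shows $Z' = S ({}^t Z)^{-1} S^{-1}$ with $S = \opn{diag}(1, a_2^{-1})$; as $Z \mapsto S ({}^t Z)^{-1} S^{-1}$ is a group homomorphism $\opn{SL}(2,\ZZ) \to \opn{SL}(2,\QQ)$, we obtain $(Z_1 Z_2)' = Z_1' Z_2'$. Writing $g_Z$ in block-diagonal form $\opn{diag}(Z', I, Z)$, this yields $g_{Z_1} g_{Z_2} = g_{Z_1 Z_2}$, so the map is a homomorphism, and injectivity follows at once by reading off the bottom-right block. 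I expect the integrality of the $Z' R' - R' Z$ block to be the only genuine difficulty, the remaining steps being routine verifications.
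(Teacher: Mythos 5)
Your proposal is correct and follows essentially the same route as the paper: membership in $N(F)$ via ${}^tZ'AZ = A$ (the paper phrases this as $Z' = {}^t(AZ^{-1}A^{-1})$), and integrality via the $M^{-1}g_Z M \in \opn{GL}(6,\ZZ)$ criterion of Lemma \ref{NWU}, reducing to the three blocks $Z'$, $Z'R'-R'Z$, and $B^{-1}C(Z-I)$, which the paper calls $Z'$, $P$, $Q$. Your only departures are cosmetic improvements: you handle $P$ structurally as $(Z'-I)R' - R'(Z-I)$ with entrywise divisibility by $\opn{det}B$ where the paper computes the entries of $P$ explicitly, and you verify the homomorphism and injectivity of $Z \mapsto g_Z$ via $Z' = S({}^tZ)^{-1}S^{-1}$, a point the paper leaves implicit in the block-diagonal form.
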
 
\begin{proof}
Suppose $A$, $B$, $C$, and $R'$ are as in Lemma \ref{isobasisp}.
Let  $P:=Z'R'-R'Z$, $Q:=-B^{-1}C + B^{-1}CZ$, and let $Z$, $Z'$ be given by \eqref{Zdef}, \eqref{Z'def}, respectively.
As $Z'={}^t(A Z^{-1} A^{-1})$ then, by Lemma \ref{NWU}, $g_Z \in N(F) \cap \opn{O}(L_{6,2p^2})$ if and only if $Z'$, $P$, $Q \in M_2(\ZZ)$.

As 
\begin{equation*}
Z' =
\begin{pmatrix}
d & -c a_2 \\
-b/a_2 & a 
\end{pmatrix}
\end{equation*}
and $Z \in \Gamma_N$, then $Z' \in M_2(\ZZ)$.

If 
\begin{equation*}
R' = 
\begin{pmatrix}
w & x \\
y & z \\
\end{pmatrix},
\end{equation*}
then
\begin{equation*}
P = 
\begin{pmatrix}
-a_2 c y - aw + dw - cx & -a_2cz - bw \\
-cz - bw/a_2 & -by + az - dz - bx/a_2 
\end{pmatrix}.
\end{equation*}
As $Z \in \Gamma_N$, then $P \in M_2(\ZZ)$.

As $Z \equiv I \bmod{N}$, then $C - CZ \equiv 0 \bmod{N}$.
As $\opn{det}B \vert N$ and $R'$, $B^{-1} \in \opn{det} B^{-1} M_2(\ZZ)$,  then
\begin{equation*}
Q = B^{-1} (CZ - C) \in M_2(\ZZ),
\end{equation*}
and the result follows.
\end{proof}
\begin{lem}\label{Ylemma}
If $Y \in M_2(N \ZZ)$ then, on the basis given in Lemma \ref{Qbasis}, there exists $g_Y \in W(F) \cap \opn{O}(L_{6,2p^2})$ of the form
\begin{equation*}
g_Y = 
\begin{pmatrix}
I & * & * \\ 
0 & I & Y \\ 
0 & 0 & I
\end{pmatrix}.
\end{equation*}
\end{lem}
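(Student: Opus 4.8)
The plan is to imitate the construction in Lemma \ref{Zlemma}: I would exhibit $g_Y$ with diagonal blocks $U=X=Z=I$ and $(2,3)$-block equal to $Y$, determine the remaining blocks from the defining equations of $W(F)$ in Lemma \ref{NWU}, and then confirm integrality via the criterion in that same lemma. Membership in $W(F)$ forces $V$ through the linear relation $BY+{}^tVA=0$, so I would set $V=-A^{-1}\,{}^tYB$, and it constrains $W$ through the quadratic relation $AW+{}^tWA=-{}^tYBY$. Since $A=\opn{diag}(a_1,a_1a_2)$ with $a_1=1$ (Lemma \ref{isobasisp}) and $N=a_1a_2\opn{det}B$ divides every entry of $Y$, the matrix $A^{-1}\,{}^tY$ is already integral, so $V=-A^{-1}\,{}^tYB\in M_2(\ZZ)$.

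By Lemma \ref{NWU}, with $U=Z=I$ the transported matrix $M^{-1}g_YM$ has diagonal blocks $I$, $(2,3)$-block $Y$, $(1,2)$-block $V$, and $(1,3)$-block $\widetilde W:=-VB^{-1}C+W$; thus the whole problem reduces to arranging $\widetilde W\in M_2(\ZZ)$ (the two entries marked $*$ are then unconstrained). The simplification I would exploit is $VB^{-1}C=-A^{-1}\,{}^tYBB^{-1}C=-A^{-1}\,{}^tYC$, which is integral, and hence $AVB^{-1}C=-{}^tYC$. Substituting $W=\widetilde W+VB^{-1}C$ into the $W(F)$ constraint turns it into $A\widetilde W+{}^t\widetilde WA=S$ with $S=-{}^tYBY+{}^tYC+{}^tCY$, so a suitable $g_Y$ exists precisely when this symmetric equation admits an integral solution $\widetilde W$.

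Finally I would check solvability directly. Writing $\widetilde W=\bsm x & y \\ z & w \esm$ gives $A\widetilde W+{}^t\widetilde WA=\bsm 2x & y+a_1a_2z \\ y+a_1a_2z & 2a_1a_2w \esm$, so an integral solution exists iff $S_{11}$ is even, $S_{12}\in\ZZ$, and $2a_1a_2\mid S_{22}$; the remaining freedom is exactly the $U(F)$-direction. Writing $Y=NY'$ with $Y'\in M_2(\ZZ)$ gives $S=N\bigl(-N\,{}^tY'BY'+{}^tY'C+{}^tCY'\bigr)$, so $N$ divides every entry (giving $S_{12}\in\ZZ$); moreover, because $B$ is the Gram matrix of the even lattice $E^{\perp}/E$ the diagonal entries of ${}^tY'BY'$ are even, and the diagonal entries of ${}^tY'C$ and ${}^tCY'$ coincide, so the bracketed matrix has even diagonal. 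Hence $S_{11}$ and $S_{22}$ are in fact divisible by $2N=2a_1a_2\opn{det}B$, which yields $S_{11}$ even and $2a_1a_2\mid S_{22}$, and an integral $\widetilde W$ exists. Setting $g_Y=M\,\bsm I & V & \widetilde W \\ 0 & I & Y \\ 0 & 0 & I \esm M^{-1}$ then gives the required element of $W(F)\cap\opn{O}(L_{6,2p^2})$. The main obstacle is this last divisibility bookkeeping: tracking the factor $N=a_1a_2\opn{det}B$ across the normal forms $(a_1,a_1a_2)\in\{(1,1),(1,p),(1,2p)\}$ and using the evenness of $B$ to promote divisibility by $N$ to divisibility by $2a_1a_2$ in the $(2,2)$ slot.
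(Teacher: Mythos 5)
Your proposal is correct and takes essentially the same route as the paper: both proofs use the membership equations $BY + {}^tVA = 0$ and ${}^tYBY + AW + {}^tWA = 0$ together with the integrality criterion of Lemma \ref{NWU}, solve the linear equation for $V$, and use the evenness of $B$ and the hypothesis $Y \in M_2(N\ZZ)$ to solve the symmetric equation for the $(1,3)$-block in integers. The only (harmless) difference is bookkeeping: the paper solves for an integral $W$ directly and separately secures $VB^{-1}C \in M_2(\ZZ)$ by arranging $V \in M_2((\opn{det}B)\ZZ)$, whereas you fold the $C$-terms into the transported equation $A\widetilde{W} + {}^t\widetilde{W}A = S$ via the identity $VB^{-1}C = -A^{-1}\,{}^tYC$ — which, as a small bonus, shows that $Y \in M_2(a_1a_2\ZZ)$ would already suffice.
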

\begin{proof}
Fix $Y \in M_2(N \ZZ)$.
Let $A$, $B$, and $C$ be as in Lemma \ref{isobasisp} and $M$ be as in Lemma \ref{Qbasis}.
By Lemma \ref{NWU},
\begin{equation*}
g_Y
=
\begin{pmatrix}
I & V & W \\
0 & I & Y \\
0 & 0 & I 
\end{pmatrix}
\end{equation*}
belongs to $W(F)$ if and only if both
\begin{equation}\label{Y1}
BY + {}^t VA = 0
\end{equation}
and
\begin{equation}\label{Y2}
{}^t YBY + AW + {}^t WA = 0
\end{equation}
are satisfied.
If $g_Y \in W(F)$ then, by Lemma \ref{NWU}, $g_Y \in W(F) \cap \opn{O}(L_{6,2p^2})$ if and only if  
$V$, $Y$, and $P$ are integral matrices, where
\begin{equation*}
P:= W - VB^{-1}C.
\end{equation*}
Equation \eqref{Y1} has a solution in $V$ if $Y \in M_2(a_1 a_2 \ZZ)$, and the matrix $P$ is integral if $V \in M_2( (\opn{det} B) \ZZ)$.
Therefore, by \eqref{Y1}, both conditions are satisfied if $Y \in M_2(N \ZZ)$. 

If 
\begin{equation*}
W = 
\begin{pmatrix}
w_{11} & w_{12} \\
w_{21} & w_{22}
\end{pmatrix},
\end{equation*}
then \eqref{Y2} is equivalent to
\begin{align*}
- {}^tYBY 
& = AW + {}^tWA \\
& = 
\begin{pmatrix}
2a_1 w_{11} & a_1 w_{12} + a_1 a_2 w_{21} \\
a_1 w_{12} + a_1 a_2 w_{21}  &  2a_1 a_2 w_{22}
\end{pmatrix}, 
\end{align*}
and so has a solution in $W$ if $Y \in M_2(N \ZZ)$.  
\end{proof}
Suppose $(z, \underline{w}, \tau)$ are coordinates for $\dc_{L_{6,2p^2}}(F)$, as in \eqref{homogdef}.
Local coordinates for a cover $\dc_{2p^2}(F)^{tor}$ of a toroidal compactification $\fc^{tor}_{2p^2}$ in a neighbourhood of $F$ are obtained from $(z, \underline{w}, \tau)$ by replacing $z$ with $u = \opn{exp}_{a_1}(z) = e^{2 \pi i /a_1}$, and allowing $u=0$.
The group acting on the cover is given by $G(F):= (N(F) \cap \opn{O}^+(L, h)) / (U(F) \cap \opn{O}^+(L, h) )$.
\begin{thm}\label{fcFsingsthm}
If $g' \in G(F)$ fixes a point $x = (0, \underline{w}, \tau) \in \dc_{2p^2}(F)^{tor}$ then, in local coordinates around $x$, $g'$ acts by
\begin{equation}\label{gaction}
\opn{diag}(\omega_0, \xi \omega_1, \xi \omega_2, \xi^2),
\end{equation}
where $\omega_1$, $\omega_2$, and $\xi$ are 4-th or 6-th roots of unity, and $\omega_0$ is a 12-th root of unity.
For given $(\omega_1, \omega_2, \xi)$, bounds for the number of connected components of $\dc_{2p^2}(F)^{tor}$ fixed by some $g \in G(F)$ acting as in \eqref{gaction} are given in Table \ref{counttab1} and Table \ref{counttab2}, where $J_{N,B} = N \opn{det} B$ and $\omega = e^{2 \pi i /3}$.
Assuming $g'$ acts non-trivially, the image of $x$ in $\fc_{2p^2}^{tor}$ is singular.
For invariants $(a_1, a_2)$ corresponding to $F$, the values of $N$, $\opn{det} B$, and $K_N$ are given in Table \ref{invstab}.
\begin{table}[h!]
\begin{tabular}{|c|c|| c | c | c | c |}
\cline{3-6}
\multicolumn{2}{c}{} 
&
\multicolumn{4}{| c |}{$\xi$}\\
\cline{3-6}
\multicolumn{2}{c|}{} & $i$ & $-1$ & $-i$ & $1$ \\
\hline
\multirow{6}{1em}
{$\chi_X$} & $\Phi_1^2$ & $2^8 K_N J_{N,B}^4 $ & $1 $ & $2^8 K_N J_{N,B}^4 $ & -  \\
& $\Phi_1 \Phi_2$ & $2^8 K_N J_{N,B}^4 $ & $2^5 J_{N,B}$ & $2^8 K_N J_{N,B}^4 $ & $2^5 J_{N,B}$ \\
& $\Phi_2^2$ & $2^8 K_N J_{N,B}^4$ & - & $2^8 K_N J_{N,B}^4 $ & $1$ \\
& $\Phi_3$ & $2^4 K_N J_{N,B}^4 $ & $1$ & $2^4 K_N J_{N,B}^4$ &  $1$ \\
& $\Phi_4$ & $2^4 K_N J_{N,B}^2$ & $1$ & $2^4 K_N J_{N,B}^2$ & $1$ \\
& $\Phi_6$ & $2^4 K_N J_{N,B}^4$ & $1$ & $2^4 K_N J_{N,B}^4$ & $1$ \\
\hline
\end{tabular}\caption{}
\label{counttab1}
\end{table}
\begin{table}[h!]
\begin{tabular}{|c|c|| c | c | c | c |}
\cline{3-6}
\multicolumn{2}{c}{} 
&
\multicolumn{4}{| c |}{$\xi$}\\
\cline{3-6}
\multicolumn{2}{c|}{} & $-\omega$ & $\omega$ & $\omega^2$ & $-\omega^2$ \\
\hline
\multirow{6}{1em}
{$\chi_X$} & $\Phi_1^2$  & $2^4 K_N J_{N,B}^4 $ & $2^4.3^4 K_N J_{N,B}^4 $ & $2^4.3^4 K_N J_{N,B}^4 $ & $2^4 K_N J_{N,B}^4 $ \\
& $\Phi_1 \Phi_2$ &  $2^4 K_N J_{N,B}^4$ & $2^4 K_N J_{N,B}^4$ & $2^4 K_N J_{N,B}^4 $ & $2^4 K_N J_{N,B}^4 $\\
& $\Phi_2^2$ &  $2^4.3^4  K_N J_{N,B}^4$ & $2^4 K_N J_{N,B}^4 $ & $2^4 K_N J_{N,B}^4 $ & $2^4.3^4  K_N J_{N,B}^4 $\\
& $\Phi_3$ &  $ 2^8 K_N J_{N,B}^4$ & $2^4 K_N J_{N,B}^2$ & $2^4 K_N J_{N,B}^2$ & $2^8 K_N J_{N,B}^4$ \\
& $\Phi_4$ & $2^4 K_N J_{N,B}^4$ & $2^4 K_N J_{N,B}^4$ & $2^4 K_N J_{N,B}^4$ & $2^4 K_N J_{N,B}^4$ \\
& $\Phi_6$ & $2^4 K_N J_{N,B}^2$ & $2^8 K_N J_{N,B}^4$ & $2^8 K_N J_{N,B}^4$ & $2^4 K_N J_{N,B}^2$\\
\hline
\end{tabular}\caption{}
\label{counttab2}
\end{table}
\begin{table}[h!]
\begin{tabular}{|c|c|c|c|}
\hline
$(a_1, a_2)$ & $N$ & $\opn{det} B$ & $K_N$ \\
\hline
$(1,1)$ & $1$ & $12p^2$ & $12p^2$ \\
$(1,p)$ & $p$ & $12$ & $12p$ \\
$(1,2p)$ & $2p$ & $3$ & $6p$ \\ 
\hline
\end{tabular}
\caption{}\label{invstab}
\end{table}
\end{thm}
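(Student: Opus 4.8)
The plan is to reduce everything to the explicit description of the $N(F)$-action on the Siegel-domain coordinates $(z,\underline{w},\tau)$ of \eqref{homogdef} furnished by Proposition \ref{Naction}, combined with the lattice data of Lemma \ref{isobasisp}. Suppose $g' \in G(F)$ fixes $x = (0,\underline{w},\tau)$. Reading the bottom line of Proposition \ref{Naction}, the lower-right block $Z = \bsm a & b \\ c & d \esm$ of a lift of $g'$ must fix $\tau \in \HH^+$ as a M\"obius transformation, so $Z$ is elliptic (or $\pm I$) in $\opn{SL}(2,\ZZ)$, of order in $\{1,2,3,4,6\}$, and $\xi := (c\tau+d)^{-1}$ is a $4$th or $6$th root of unity. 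I would then differentiate the three coordinate maps at $x$. Since $z \mapsto z/\det Z + (\ldots)$ has $\det Z = 1$ and the bracket is independent of $z$, while $u = e^{2\pi i z/a_1}$ vanishes at $x$, the $u$-row of the Jacobian is $(\omega_0,0,0,0)$ with $\omega_0 = e^{2\pi i f/a_1}$ ($f$ the value of the bracket at $x$); the $\tau$-row gives $\partial\tau'/\partial\tau = (c\tau+d)^{-2} = \xi^2$ and no cross terms; and the $\underline{w}$-block gives $(c\tau+d)^{-1}X = \xi X$ with $X \in \opn{O}(B)$. The Jacobian is thus block-triangular, and because $g'$ has finite order it is diagonalisable, so in eigencoordinates it is $\opn{diag}(\omega_0,\xi\omega_1,\xi\omega_2,\xi^2)$ with $\omega_1,\omega_2$ the eigenvalues of $X$. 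As $B$ is a binary even form (Lemma \ref{isobasisp}), $\opn{O}(B)$ is finite and the crystallographic restriction forces $\omega_1,\omega_2$ to be $4$th or $6$th roots of unity; that $\omega_0^{12}=1$ then follows since, modulo $U(F)$, the order of $g'$ in $G(F)$ is governed by the orders of $Z$ and $X$ and hence divides $12$.

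Next I would assemble Table \ref{invstab}. By Lemma \ref{Eperpdisc}, $\det B = |D(E^\perp/E)| = |D(L_{6,2p^2})|/|H_E|^2 = 12p^2/|H_E|^2$, while the order of $D(L_{6,2p^2})/H_E^\perp$, equal to the product of its elementary divisors $(a_1, a_1a_2)$, is $|H_E|$. Running through the three cases $H_E = \{0\}$, $\la x_1 \ra$, $\la x_2\ra$ of Lemma \ref{isobasisp} (i.e. $(a_1,a_1a_2) = (1,1),(1,p),(1,2p)$, with $|H_E| = 1,p,2p$) yields $\det B = 12p^2,12,3$ and $a_1a_2 = 1,p,2p$; the remaining entries $N$ and $K_N$ of Table \ref{invstab} are then read off directly.

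The combinatorial heart is Tables \ref{counttab1}--\ref{counttab2}. Here $\chi_X$ ranges over the six products of cyclotomic polynomials of total degree $2$, namely $\Phi_1^2,\Phi_1\Phi_2,\Phi_2^2,\Phi_3,\Phi_4,\Phi_6$, these being the only characteristic polynomials of a finite-order $X \in \opn{O}(B)\subset \opn{GL}(2,\ZZ)$. For each admissible pair $(\chi_X,\xi)$ I would enumerate the $g \in N(F)\cap\opn{O}^+(L,h)$ inducing the given $(Z,X)$, using Lemma \ref{NWU} to parametrise $(V,W,Y)$ and Lemmas \ref{Zlemma} and \ref{Ylemma} to identify the relevant integral lattices ($\Gamma_N$ in the $\tau$-direction and $M_2(N\ZZ)$ for $Y$). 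The fixed locus of such a $g$ in the boundary $\{u=0\}$ is cut out by the linear equations $\tau = Z\cdot\tau$ and $\underline{w} = \xi\bigl(X\underline{w} + Y\bsm \tau \\ 1 \esm\bigr)$, and the number of connected components equals the number of solution classes modulo those lattices. I expect this to factor as powers of $J_{N,B} = N\det B$ (from the $\underline{w}$-equation modulo $M_2(N\ZZ)$), a factor $K_N$ (from the $\tau$-direction), and powers of $2$ and $3$ coming respectively from $\opn{O}(D(L_{6,2p^2})) \cong C_2^{\oplus 3}$ and from the ternary structure of the $A_2(-1)$ case, reproducing the tabulated values. This case-by-case bookkeeping over the $(\chi_X,\xi)$ grid is where I expect the main difficulty to lie.

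Finally, for the singularity claim, with the action in the diagonal form \eqref{gaction} I would invoke the standard criterion that a cyclic quotient $\CC^4/\la g'\ra$ is singular whenever $g'$ is not a quasi-reflection, i.e.\ whenever at least two of the four eigenvalues differ from $1$. The one point genuinely requiring care is that a non-trivial $g' \in G(F)$ is never a quasi-reflection: I would rule this out using the eigenvalue constraints together with the spinor-norm/$\opn{O}^+$ condition defining $G(F)$, which excludes the isolated $(1,1,1,-1)$-type configurations a true reflection would produce. I anticipate the eigenvalue computation and the invariants table to be essentially formal given the earlier lemmas, so the substantive obstacles are the component count feeding Tables \ref{counttab1}--\ref{counttab2} and the exclusion of quasi-reflections.
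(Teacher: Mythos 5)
Your outline tracks the paper's own strategy almost step for step: the same coordinates and action (Proposition \ref{Naction}), the same finite-order analysis of $Z$ and $X$ yielding \eqref{gaction} (the paper defers the Jacobian computation to \cite{Kondo}; your block-triangular expansion is a correct filling-in), the same use of $g_Y$ (Lemma \ref{Ylemma}) and $g_Z \in \Gamma_N$ (Lemma \ref{Zlemma}) to reduce $\underline{w}$ and $\tau$ modulo the integral group, your derivation of Table \ref{invstab} via $\opn{det} B = 12p^2/\vert H_E \vert^2$ (Lemma \ref{Eperpdisc}) is fine, and the Chevalley endgame is identical. The genuine gap is in the combinatorial core, which you explicitly defer and whose structure you mispredict. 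The paper's organising device is the matrix $T = I - \xi X$ and the table of $\opn{det} T$ over all pairs $(\chi_X, \xi)$ (Table \ref{thetab}), with a dichotomy: if $\opn{det} T \neq 0$, the equation $T\underline{w} = Y \bsm \tau \\ 1 \esm$ forces $\underline{w}$ into $\frac{\la 1, \tau \ra}{K \opn{det} B} \times \frac{\la 1, \tau \ra}{K \opn{det} B}$ with $K \in \{1,2,3\}$ read off from $\opn{det} T$, giving $(K N \opn{det} B)^4$ points; if $\opn{det} T = 0$ one gets the exceptional entries (the ``$1$'' and ``$-$'' identity cases, the $2^5 J_{N,B}$ count of fixed lines for $(\Phi_1\Phi_2, \pm 1)$ after noting $V = Y = 0$ when $o(Z) \leq 2$, and the $K_N J_{N,B}^2$ entries such as $(\Phi_4, \pm i)$, where only $w_1 - iw_2$ is constrained). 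In particular, the $3^4$ factors come from $K = 3$ (e.g.\ $\opn{det} T = -3\omega$ for $(\Phi_1^2, \omega)$) and occur for either isomorphism type of $B$ --- they have nothing to do with ``the ternary structure of the $A_2(-1)$ case'' as you suggest; likewise the jump from $2^4$ to $2^8$ reflects $K = 2$, while the uniform $2^4$ is the single index bound $\vert \opn{O}^+(L, h) : \widetilde{\opn{O}}^+(L_{6,2p^2}) \vert \leq 2^4$ coming from \eqref{stabindex}, applied once at the end. Without the $\opn{det} T$ analysis your ``solution classes modulo lattices'' scheme cannot reproduce the tables and would miss the degenerate rows entirely.

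Two further points. Your claim that quasi-reflections can be excluded ``using the eigenvalue constraints together with the spinor-norm/$\opn{O}^+$ condition'' would fail as stated: reflections and quasi-reflections of spinor norm $1$ exist in $\opn{O}^+$ in general, and the paper instead cites Corollary 2.29 of \cite{GHSK3}, which is a structural statement about $G(F)$ acting on the toroidal chart, not a spinor-norm phenomenon. And your assertion that ellipticity of $Z$ immediately makes $\xi = (c\tau + d)^{-1}$ a $4$th or $6$th root of unity skips the paper's argument, which needs $G_4(i) \neq 0$ and $G_6(\omega) \neq 0$ together with the exclusion of $c\tau + d = \pm 1$ when $o(Z) \in \{3,4,6\}$; this step is standard but not automatic.
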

\begin{proof}
Throughout, we assume $g \in N(F)$ represents a finite order element of $G(F)$, and that $g$ fixes the point $ x = (0, \underline{w}, \tau) \in \dc(F)^{tor}$. 
By Corollary 2.29 of \cite{GHSK3}, no element of $G(F)$ acts as a quasi-reflection.
Therefore, if $g$ acts non-trivially, by a theorem of Chevalley \cite{Chevalley}, each point in the fixed locus of $G(F)$ on $\dc_{2p^2}(F)^{tor}$ is singular.

As in Lemma \ref{NWU}, let
\begin{equation*}
g = 
\begin{pmatrix}
U & V & W \\
0 & X & Y \\
0 & 0 & Z
\end{pmatrix}
\end{equation*}
where
\begin{equation*}
Z = 
\begin{pmatrix}
a & b \\
c & d \\
\end{pmatrix}, 
\end{equation*}
and let $\xi = (c \tau + d)^{-1}$, $T = (I - \xi X)$.
As $g \in G(F)$ is of finite order, then $U$, $X$, $Z$ are of finite order.
By considering rational representations, $o(U), o(X)$, $o(Z) \in \{1,2,3,4,6\}$ and so the basis of Lemma \ref{HE} can be chosen so that each of $U$, $X$, $Z$ can be represented by one of
\begin{center}
\begin{tabular}{c c c c c c}
$\pm 
\begin{pmatrix}
1 & 0 \\
0 & 1 
\end{pmatrix}$,
& 
$\begin{pmatrix}
-1 & 0 \\
0 & 1 
\end{pmatrix}$,
&
$\begin{pmatrix}
0 & 1 \\
-1 & -1 
\end{pmatrix}$,
&
$\begin{pmatrix}
0 & -1 \\
1 & 0
\end{pmatrix}$,
& 
$\begin{pmatrix}
0 & -1 \\
1 & 1
\end{pmatrix}$.
\end{tabular}
\end{center}
Indeed, as $Z \in M_2(\ZZ)$ (Lemma \ref{NWU}) and $Z$ acts on $\HH^+$, then $Z \in \opn{SL}(2, \ZZ)$ (as noted in \cite{GHSK3}).

Suppose $o(Z) \in \{3,4, 6\}$.
By standard results on the elliptic elements of $\opn{SL}(2, \ZZ)$ \cite{Diamond}, $\tau$ is $\opn{SL}(2, \ZZ)$-equivalent to $i$, $\omega$ or $\omega^2$.
If $G_k$ is the Eisenstein series of weight $k$, then $G_4(i) \neq 0$, and $G_6(\omega) \neq 0$ (p. 10 \cite{Diamond}), and so $\xi$ is a 4-th root of unity if $Z$ is of order 4, and a 6-th root of unity if $Z$ is of order 3 or 6. 
If $c \tau + d = \pm 1$ then, as both $\{1, \omega \}$ and $\{1, i\}$ are linearly independent over $\QQ$,  
\begin{equation*}
Z = 
\begin{pmatrix}
\pm 1 & b \\
0 & \pm 1
\end{pmatrix}
\end{equation*}
which implies the contradiction $o(Z) = 1$ or $2$.
Therefore, 
\begin{equation*}
\la 1, \tau \ra 
= 
\begin{cases}
\la 1, i \ra & \text{if $o(Z)=4$}\\
\la 1, \omega \ra & \text{ if $o(Z) = 3$ or $6$.}
\end{cases}
\end{equation*}
The values of $\opn{det} T$ against $\chi_X$ and $\xi$ are given in Table \ref{thetab}.
\begin{table}[h!]
\begin{tabular}{|c | c || c | c | c | c | c | c | c | c |}
\cline{3-10}
\multicolumn{2}{c}{} 
&
\multicolumn{8}{| c |}{$\xi$}\\
\cline{3-10}
\multicolumn{2}{c|}{} & $i$ & $-1$ & $-i$ & $1$ & $-\omega$ & $\omega$ & $\omega^2$ & $-\omega^2$ \\
\cline{1-10}
\multirow{6}{1em}
{$\chi_X$} & $\Phi_1^2$ & $-2i$ & $4$ & $2i$ & 0 & $\omega$ & $-3\omega$ & $-3 \omega^2$ & $\omega^2$ \\
& $\Phi_1 \Phi_2$ & $2$ & $0$ & $2$ & $0$ & $1 - \omega^2$ & $1 - \omega^2$ & $1 - \omega$ & $1 - \omega$\\
& $\Phi_2^2$ & $2i$ & $0$ & $-2i$ & $4$ & $-3 \omega$ & $\omega$ & $\omega^2$ & $-3 \omega^2$\\
& $\Phi_3$ & $i$ & $1$ & $-i$ &  $3$ & $-2 \omega$ & $0$ & $0$ & $-2 \omega$ \\
& $\Phi_4$ & $0$ & $2$ & $0$ & $2$ & $- \omega$ & $-\omega$ & $-\omega^2$ & $-\omega^2$ \\
& $\Phi_6$ & $-i$ & $3$ & $i$ & $1$ & $0$ & $-2 \omega$ & $-2\omega^2$ & $0$\\
\hline
\end{tabular}\caption{}
\label{thetab}
\end{table}
By \eqref{homogdef},
\begin{equation*}
T \underline{w} = 
Y \bsm \tau \\ 1 \esm 
\subset 
\frac{\la 1, \tau \ra}{\opn{det} B} \times \frac{\la 1, \tau \ra}{\opn{det} B },
\end{equation*}
and so if $\opn{det} T \neq 0$, by Table \ref{thetab}, 
\begin{equation*}
\underline{w} \in
\frac{\la 1, \tau \ra}{K \opn{det} B}
\times 
\frac{\la 1, \tau \ra}{K \opn{det} B},
\end{equation*}
for appropriate $K \in \{1,2,3\}$.
Therefore, by using elements of the form $g_Y$ defined in Lemma \ref{Ylemma}, $\underline{w}$ can be reduced to one of $(K N \opn{det} B)^4$ points modulo $N(F) \cap \opn{O}(L_{6,2p^2})$.

If $o(Z) = 1$ or $2$,  then $\tau \in \HH^+$ is free and $\xi = \pm 1$.
By Lemma \ref{NWU}, $V=0$, $Y=0$, and so 
\begin{equation*}
T \underline{w} = \underline{0}.
\end{equation*}
If $\opn{det} T \neq 0$ then $\underline{w}=0$.
We consider each of the cases $\opn{det} T = 0$ separately.
If $(\chi_X, \xi) = (\Phi_1^2,1)$ or $(\chi_X, \xi)=(\Phi_2^2, -1)$, then $g$ acts as the identity (as it cannot act as a quasi-reflection); 
if $(\chi_X, \xi) = (\Phi_1 \Phi_2, 1)$, then $o(Z)=1$, $\tau$ is free, $w_1 \in (2 \opn{det} B)^{-1} \ZZ$, and $\underline{w}$ can be reduced to one of $2 N \opn{det} B$ lines by using elements of the form $g_Y \in N(F) \cap \opn{O}(L_{6,2p^2})$. 
(The case $(\Phi_1 \Phi_2, -1)$ proceeds as for $(\Phi_1 \Phi_2, 1)$.)
The remaining cases proceed as for $(\chi_X, \xi) = (\Phi_4, -i)$.
If $(\chi_X, \xi) = (\Phi_4, -i)$, then $w_1 - i w_2 \in (\opn{det} B)^{-1} \la 1, i \ra$  and $\underline{w}$ can be reduced to one of $(N \opn{det} B)^2$ lines using elements of the form $g_Y \in N(F) \cap \opn{O}(L_{6,2p^2})$.

By standard results on congruence subgroups (p.13 \cite{Diamond}), the index $K_N := \vert \opn{SL}(2, \ZZ) : \Gamma_N \vert$ is given by
\begin{equation*}
K_N = N^3 \prod_{p \mid N} \left (1 - \frac{1}{p^2} \right ).
\end{equation*}
Therefore, if $o(Z) = 3,4,6$, then $\tau$ can be reduced to one of $K_N$ cases modulo $N(F) \cap \opn{O}(L_{6,2p^2})$ by using elements of the form $g_Z \in \Gamma_N \subset N(F)_{\ZZ}$ defined in Lemma \ref{Zlemma}.

By Proposition \ref{modgroup}, $\widetilde{\opn{O}}^+(L_{6,2p^2}) \subset \opn{O}^+(L, h)$ and one obtains a final bound from \eqref{stabindex} by noting that $\vert \opn{O}^+(L, h) : \opn{O}(L_{6,2p^2}) \vert \leq 2^4$.

The statement about the action of $g$ in local coordinates follows as in \cite{Kondo}, and the values in Table \ref{invstab} can be calculated from Lemma \ref{isobasisp}.
The statement about $\omega_0$ follows by noting that $o(\omega_0)$ divides $lcm(o(U), o(X), o(Z))$. 
\end{proof}
\begin{rmk}
If $g \in G(F)$ acts as $\opn{diag}(\omega_0, \xi \omega_1, \xi \omega_2 \xi^2)$ in a neighbourhood of $(0, \underline{w}, \tau)$ and $\omega_0 \neq 1$, then $(0, \underline{w}, \tau)$ is not contained in the closure of the singular locus of $\fc_{2p^2}$.
The singular locus of $\fc_{2p^2}$ can be studied by different methods, as in \cite{daweskod}.
\end{rmk}
\section{Acknowledgements}
This paper originates from my PhD thesis.
I thank Professor G. K. Sankaran for his supervision, and the University of Bath for financial support in the form of a research studentship.
I also thank the Riemann Center for Geometry and Physics for providing excellent working conditions as I edited the paper, and for financial support in the form of a Riemann fellowship. 
\bibliographystyle{alpha}
\bibliography{bib}{}
\bigskip
{
Riemann Center for Geometry and Physics \\
Leibniz Universit\"at Hannover \\
Appelstra\ss e 2 \\
30167 Hannover  \\
Deutschland 
}
\bigskip
\\
\texttt{matthew.r.dawes@bath.edu}
\end{document}